\numberwithin{equation}{section}
\numberwithin{figure}{section}
\definecolor{mLightBrown}{HTML}{EB811B}
\theoremstyle{plain}
\newtheorem{theorem}{Theorem}[section]
\crefname{theorem}{Theorem}{Theorems}
\newtheorem{lemma}[theorem]{Lemma}
\crefname{lemma}{Lemma}{Lemmata}
\crefname{proposition}{Proposition}{Propositions}
\newtheorem{corollary}[theorem]{Corollary}
\crefname{collary}{Corollary}{Corollaries}
\newtheorem{assumption}[theorem]{Assumption}
\crefname{assumption}{Assumption}{Assumptions}
\newtheorem{remark}{Remark}
\newcommand{\bitem}{\begin{itemize}}
\newcommand{\eitem}{\end{itemize}}
\newcommand{\mc}[1]{\mathcal{#1}}
\newcommand{\R}{\mathbb{R}}
\newcommand{\bpm}{\begin{pmatrix}}
\newcommand{\epm}{\end{pmatrix}}
\newcommand{\bsm}{\left(\begin{smallmatrix}}
\newcommand{\esm}{\end{smallmatrix}\right)}
\DeclareMathOperator{\argmin}{arg min}
\title{
Multilevel Optimization: Geometric Coarse Models and Convergence Analysis\textsuperscript{\ddag}}
\author{ \small{Ferdinand Vanmaele\textsuperscript{\dag}, 
Yara Elshiaty\textsuperscript{*, \dag}, \and Stefania Petra\textsuperscript{\dag}}}
\begin{document}

\begin{abstract}
We study multilevel techniques, commonly used in PDE multigrid literature, 
to solve structured optimization problems. For a given hierarchy of levels, 
we formulate a \emph{coarse model} that approximates the problem at each level 
and provides a descent direction for the fine-grid objective using fewer variables. 
Unlike common algebraic approaches, we assume the objective function and its gradient 
can be evaluated at each level. Under the assumptions of strong convexity and gradient 
$L$-smoothness, we analyze convergence and extend the method to box-constrained optimization. 
Large-scale numerical experiments on a discrete tomography problem 
show that the multilevel approach converges rapidly when far from the solution 
and performs competitively with state-of-the-art methods.

\end{abstract}

\maketitle
\renewcommand{\thefootnote}{\fnsymbol{footnote}}
\footnotetext[3]{This preprint has not undergone peer review or any post-submission improvements 
or corrections. A Version of Record of this contribution will appear in the proceedings of the 
Scale Space and  Variational Methods in Computer Vision (SSVM) 2025 conference, to be published 
in the  Lecture Notes in Computer Science (LNCS) series by Springer. The final published version 
will be available online.
}
\footnotetext[1]{Institute for Mathematics, Heidelberg University \; (\url{elshiaty@math.uni-heidelberg.de})}
\footnotetext[2]{Institute for Mathematics \& Centre for Advanced Analytics and Predictive Sciences (CAAPS), 
University of Augsburg \; (\url{ferdinand-joseph.vanmaele@uni-a.de},\url{stefania.petra@uni-a.de})}

\markboth{\MakeUppercase{Ferdinand Vanmaele, Yara Elshiaty, and Stefania Petra}}{
  \MakeUppercase{Multilevel Optimization: Geometric Coarse Models}}

\section{Introduction}
\label{sec:Introduction}

In this paper, we consider optimization problems of the form  
\begin{equation}  
	\min_{y \in C \subseteq \mathbb{R}^n} f(y), \qquad f \in \mathcal{S}_{\mu,L}^{2,1}(C),  
	\label{eq:f-function-class}  
\end{equation}  
where $\mathcal{S}_{\mu,L}^{2,1}(C)$ represents the class of twice-differentiable, $\mu$-strongly convex functions with $L$-Lipschitz continuous gradients over a closed simple convex set $C$, such as a box. We assume that function $f$ possesses an inherent ``geometry'', meaning that it can be discretized at
multiple levels of resolution, forming a hierarchical  structure.

We employ multilevel optimization, as introduced in \cite{nash_multigrid_2000}, to exploit this hierarchy. The core idea is to reformulate the problem on a coarse grid, referred to as the \emph{coarse model}, where computations involve significantly fewer variables. During optimization on the fine grid, the search direction is determined using the coarse model. This approach not only reduces computational costs but also benefits from improved conditioning of the coarse-grid problems.  
  
The classical model by Nash \cite{nash_multigrid_2000} constructs the coarse representation by linearly modifying the fine objective, evaluated on the coarse grid. The linear term incorporates \emph{gradients} from both the coarse and fine objectives, using appropriate mappings between grid levels. Alternatively, \cite{ho_newton-type_2019} proposes a quadratic coarse model to approximate the fine objective by transferring its \emph{Hessian} to the coarse grid via grid transfer operators.

Under the smoothness and convexity assumptions specified in \eqref{eq:f-function-class}, \cite{ho_newton-type_2019} demonstrates a composite convergence rate for their quadratic coarse model by combining linear and quadratic rates. In this work, we establish the relationship between the two coarse models, showing that both use first- and second-order information from the fine objective, which enables us to derive a similar convergence rate. Crucially, the Nash model \cite{nash_multigrid_2000} avoids the explicit computation of the fine objective's Hessian on either the fine or coarse grid. This makes it particularly advantageous for large-scale problems, such as those encountered in imaging applications.

\vspace{1mm}
\noindent
\textbf{Related work.}
Seminal contributions to unconstrained smooth multilevel optimization include \cite{nash_multigrid_2000,gratton_recursive_2008-1,wen_line_2010}. These foundational ideas were later extended to address box-constrained and simplex-constrained optimization in \cite{mohammed_multigrid_2016,gratton_recursive_2008,gratton_numerical_2008} and further developed for more general nonsmooth convex optimization in \cite{Parpas:2017,IML_Fista,Ang_MG_Nonsmooth}. Convergence results for the unconstrained case, under assumption (\ref{eq:f-function-class}), were established in \cite{ho_newton-type_2019} for a quadratic coarse model solely dependent on the objective $f=f_{0}$. In this work, we extend these results to the general coarse model proposed in \cite{nash_multigrid_2000}.

\vspace{1mm}
\noindent
\textbf{Contribution and organization.}
Section \ref{sec:Multilevel-Optimization} introduces key concepts of multilevel optimization and establishes a connection between the coarse model of Nash \cite{nash_multigrid_2000} and the algebraic coarse model from \cite{ho_newton-type_2019}. In Section \ref{sec:Rate-of-Convergence}, we extend the convergence analysis and prove linear rate for multilevel optimization in the unconstrained setting, generalizing the results of \cite{ho_newton-type_2019} to the Nash model. Section \ref{sec:Tomography} presents large-scale experiments, demonstrating that the multilevel approach performs on par with L-BFGS-B \cite{liu_limited_1989,byrd_limited_1995,zhu_algorithm_1997} and offers significant potential for further refinement.

\vspace{1mm}
\noindent
\textbf{Basic notation.}
$\left\langle \cdot,\cdot\right\rangle $ denotes the standard inner
product on $\mathbb{R}^{n}$, $\nabla f$ the gradient and $\nabla^{2}f$
the Hessian of a sufficiently differentiable function $f:\mathbb{R}^{n}\rightarrow\mathbb{R}$.
We denote componentwise multiplication of vectors by $uv=(u_{1}v_{1},\dots,u_{n}v_{n})^{\top}$
and, for strictly positive vectors $v\in\mathbb{R}_{++}^{n}$, componentwise
division by $\frac{u}{v}$. Like-wise, the functions $e^{x}$ and
$\log x$ apply componentwise to a vector $x$.

\section{Multilevel Optimization}
\label{sec:Multilevel-Optimization}
\textbf{Two level optimization.}
We describe a two-grid cycle for updating $y_{k+1}$ from the current iterate $y_k$. 
This involves either a search direction from a coarse-grid model with fewer variables (\emph{coarse correction}) 
or, when coarse correction is ineffective, a standard local approximation defined on the fine grid 
(\emph{fine correction}). The approach is summarized in Algorithm \ref{alg:Two-level-algorithm}.

We denote the fine objective by $f:{\R}^{n}\to \R$ and $g:{\R}^{N}\to\R$
its discretization on a coarse grid. In this work, we assume that the fine problem can be directly discretized 
on coarser grids, as is common in variational models derived from infinite-dimensional problems, such as tomography.
This \emph{geometric} approach contrasts with \emph{algebraic} methods, where the problem is evaluated solely on 
the fine grid, and coarse-grid representations are constructed using mappings to transfer quantities between grids. 
Specifically, we assume that linear maps $R:\mathbb{R}^{n}\rightarrow\mathbb{R}^{N}$ \emph{(restriction)} and 
$P:\mathbb{R}^{N}\rightarrow\mathbb{R}^{n}$ \emph{(prolongation)} are provided to translate quantities between 
levels, typically via linear interpolation or injection as in classical multigrid methods 
\cite{trottenberg_multigrid_2000}.
% For these operators, we make the following standard assumptions \cite{ho_newton-type_2019}:
% $R$ is the transpose of the prolongation operator $P$, up to a constant
% $c>0$,\footnote{Without loss of generality, we take $c\equiv1$ to simplify the notation
% for the analysis.} and the prolongation operator $P$ has full column rank $n$.
We assume the standard Galerkin condition $R=cP^\top$ \cite{ho_newton-type_2019}, where $c>1$ 
(taken 1 for simplicity), and $\rank(P)=n$.

\begin{algorithm}[h]
\DontPrintSemicolon
\textbf{initialization:} Set $k=0$ and choose initial point $y_0$, two grids, transfer operators $R$ and $P$ 
and a coarse representation $g$ of the objective. \;
\Repeat{a stopping rule is met.} {
  \If{condition to use coarse model is satisfied at $y_k$} {
    Define coarse model $\psi_k(x;Ry_k,g,R\nabla f(y_k))$.\tcc*{coarse model}
    Find  $x_k^+$ such that $\psi_k(x_k^+)<\psi_k(x_k)$. \label{mls-find-lesser-value}\;
    Set $d_k=P(x_k^+ -Ry_k)$. \tcc*{descent direction for $f$}
    Find $\alpha_k>0$ such that $f(y_k+\alpha_k d_k) < f(y_k)$. \label{f-lesser-value}\tcc*{line search}
    $y_{k+1}= y_k+ \alpha_k d_k$.
  }\Else{
  Apply one fine-grid iteration to update $y_k$. \label{mls-fine-correction}\;
  }
  Increment $k \leftarrow k+1$.
  }
\caption{Two level optimization \cite{ho_newton-type_2019,wen_line_2010}}\label{alg:Two-level-algorithm}
\end{algorithm}

\begin{figure}[h]
\begin{minipage}[t]{0.2\columnwidth}%
\includegraphics[width=1\columnwidth]{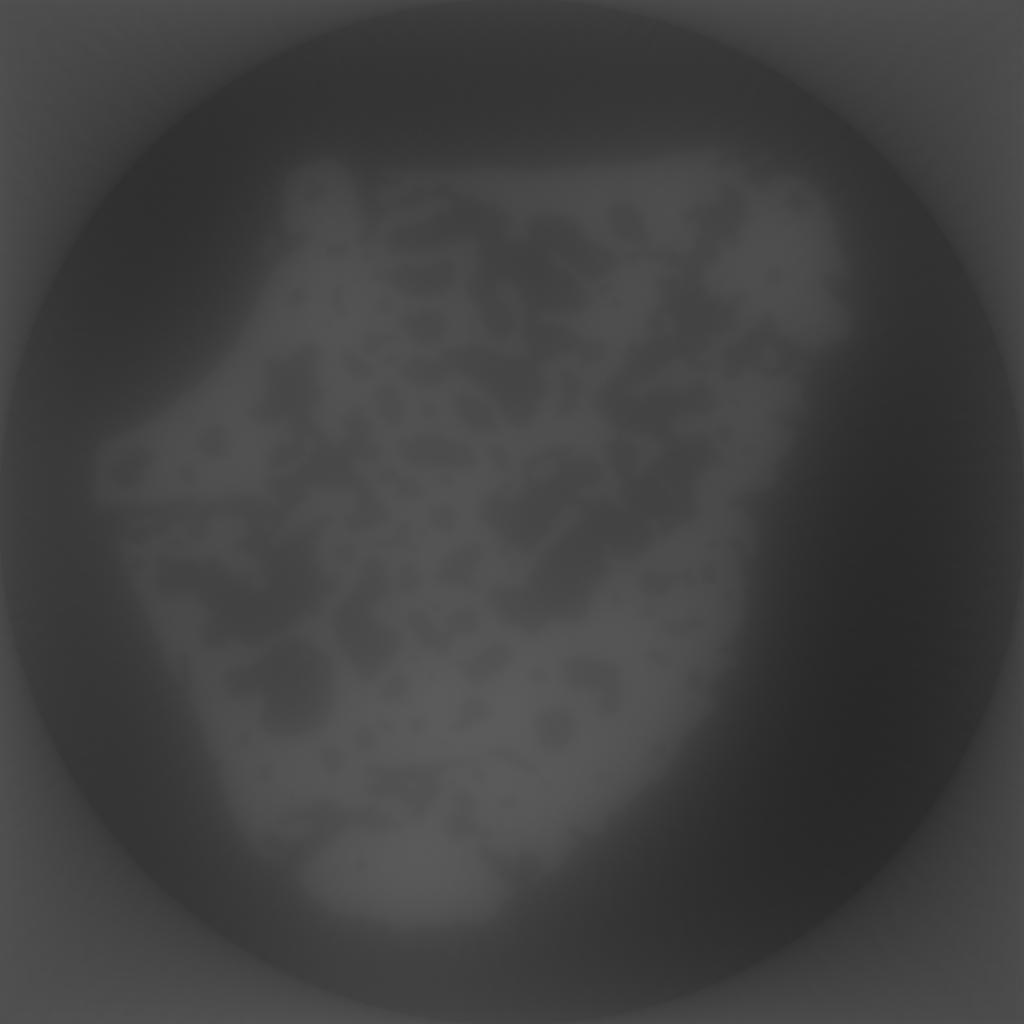}%
\end{minipage}\hfill{}%
\begin{minipage}[t]{0.2\columnwidth}%
\includegraphics[width=1\columnwidth]{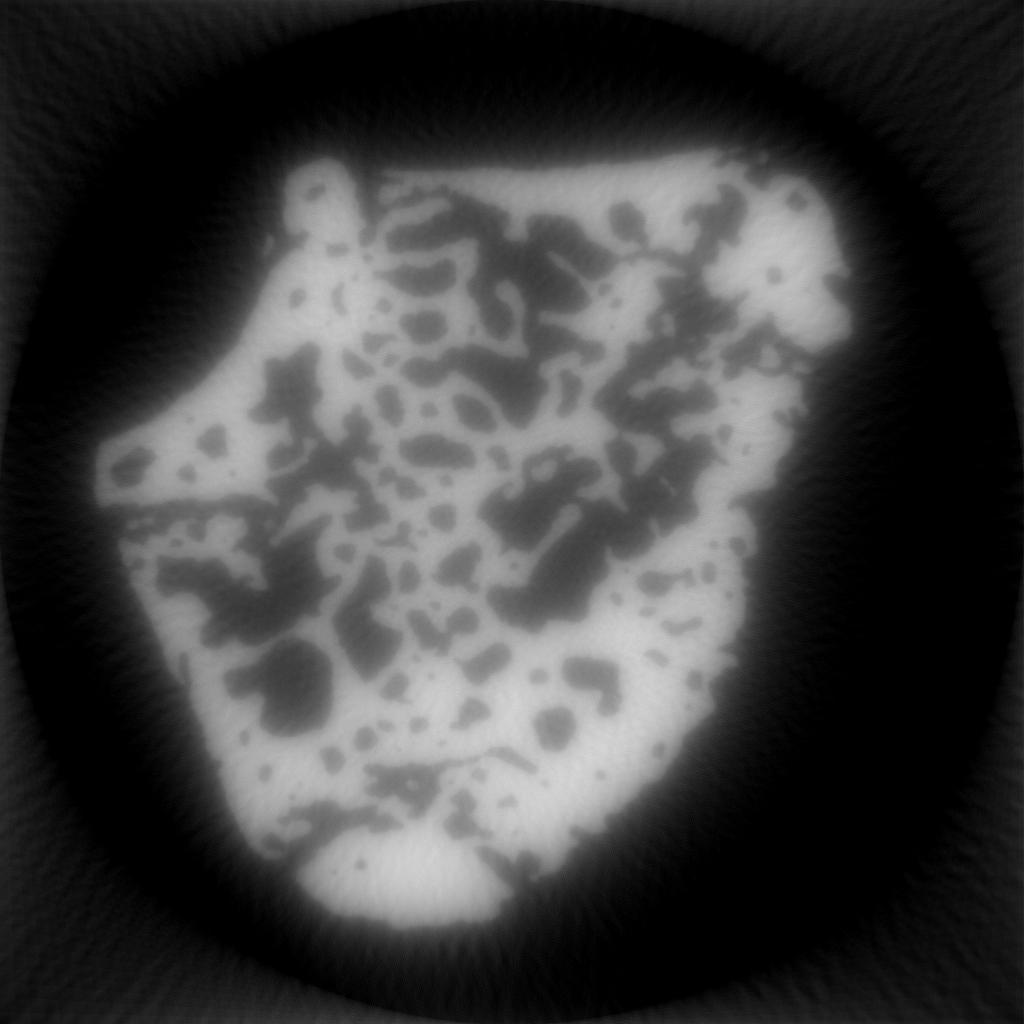}%
\end{minipage}\hfill{}%
\begin{minipage}[t]{0.2\columnwidth}%
\includegraphics[width=1\columnwidth]{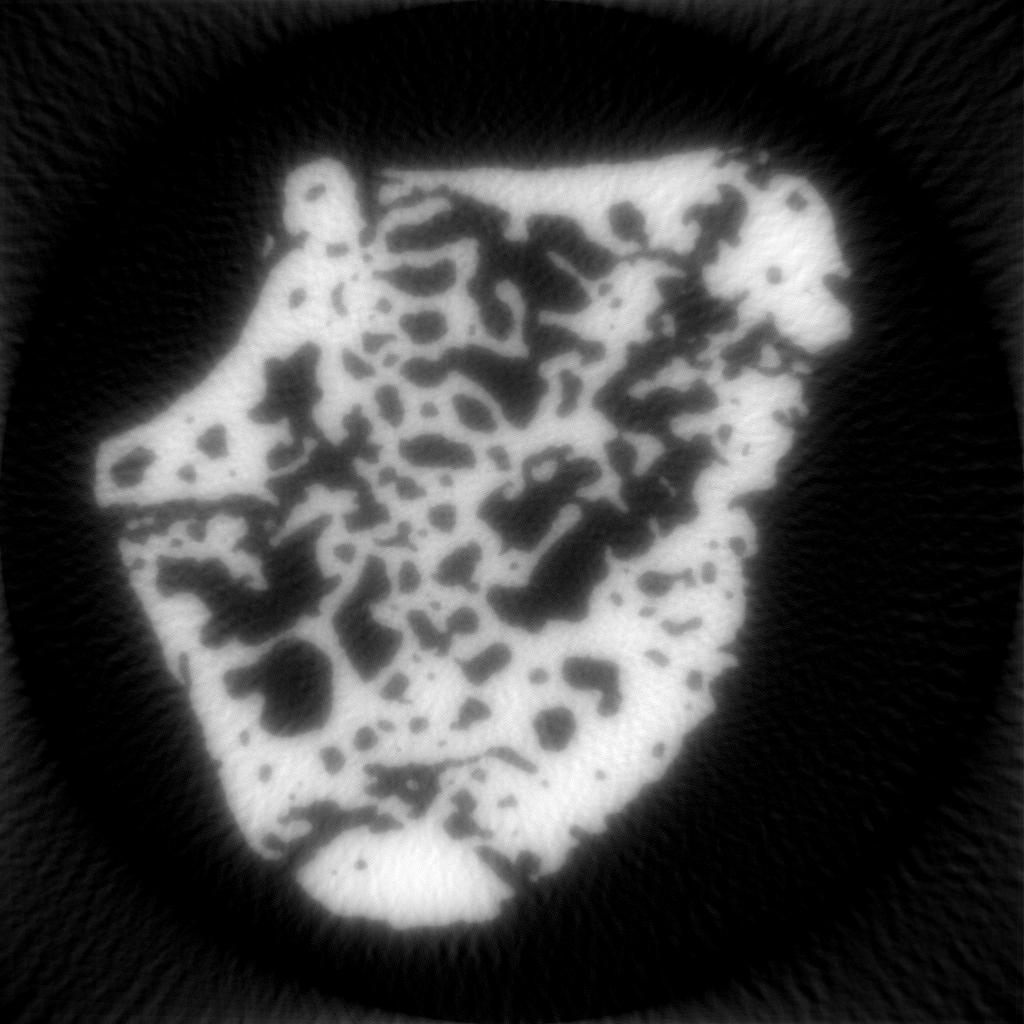}%
\end{minipage}\hfill{}%
\begin{minipage}[t]{0.2\columnwidth}%
\includegraphics[width=1\columnwidth]{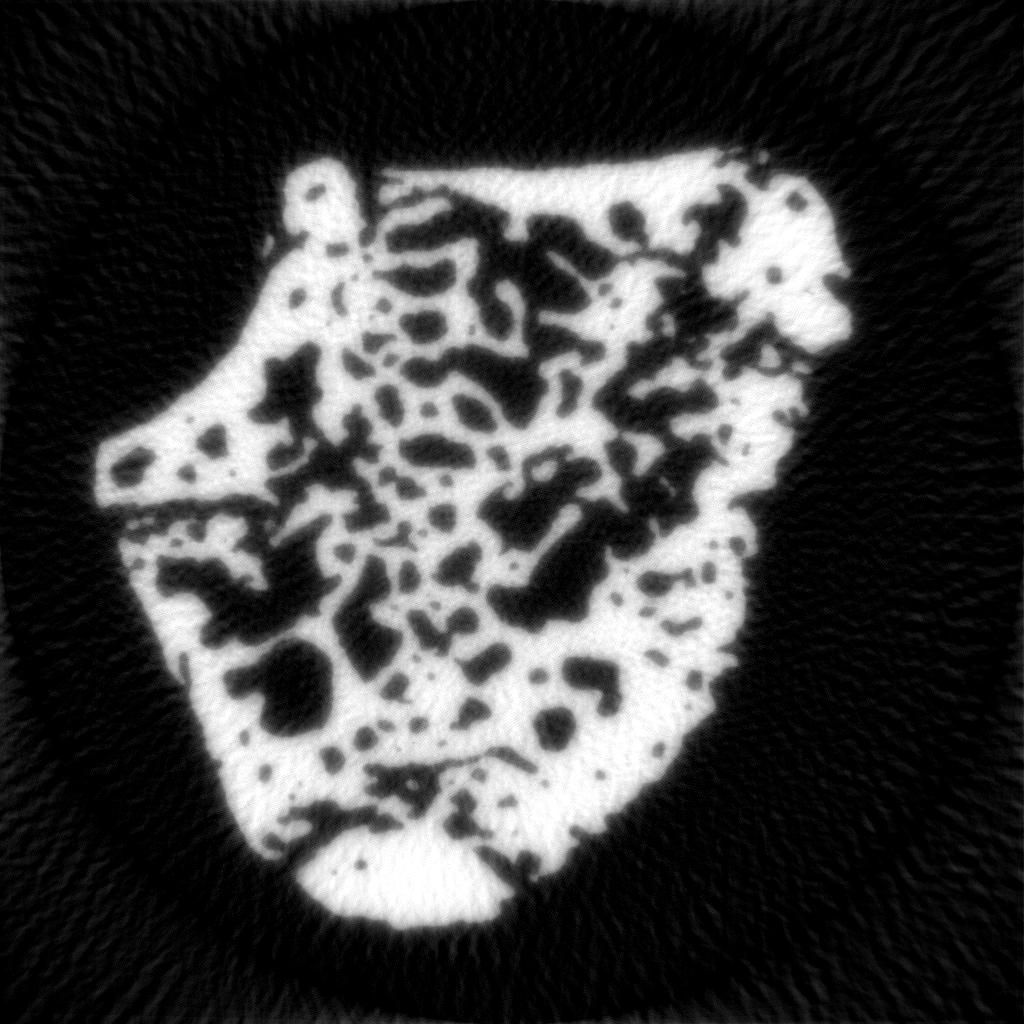}%
\end{minipage}\hfill{}%
\begin{minipage}[t]{0.2\columnwidth}%
\includegraphics[width=1\columnwidth]{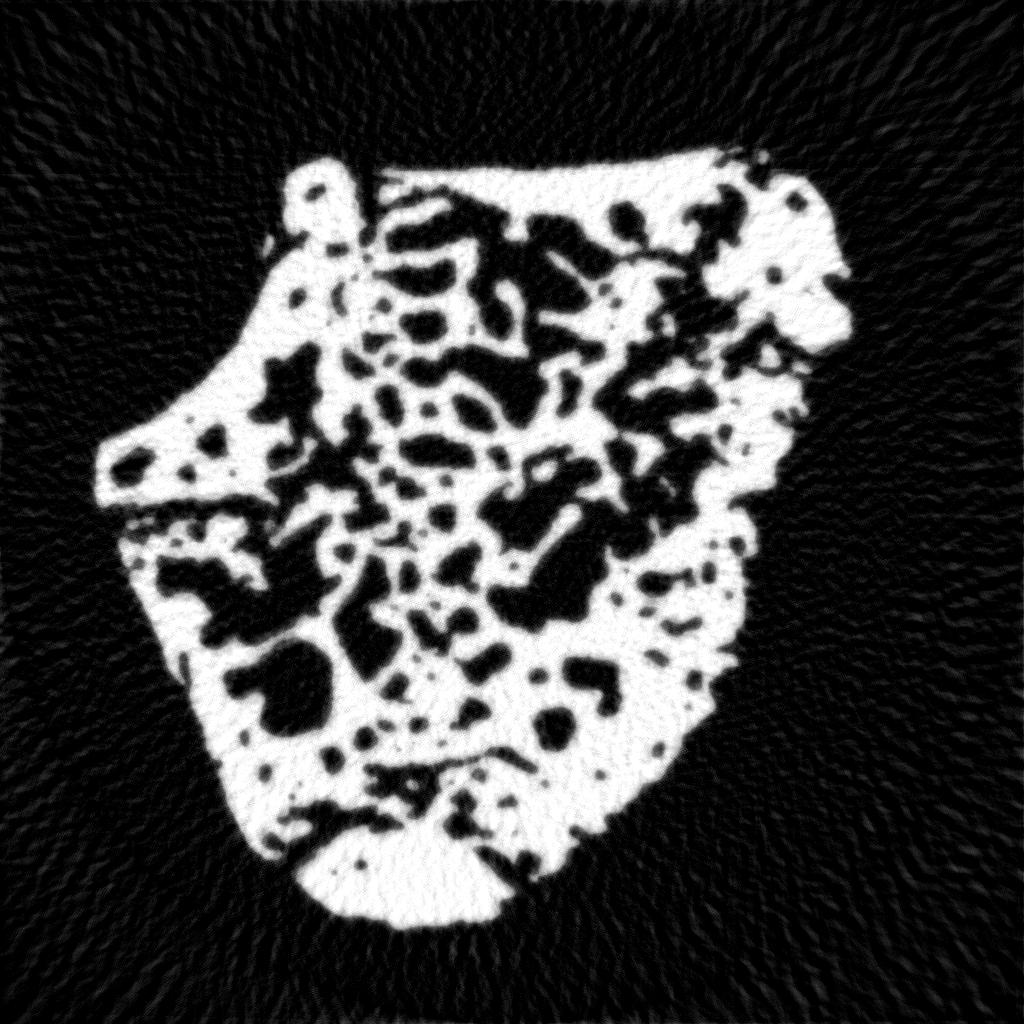}%
\end{minipage}

\begin{minipage}[t]{0.2\columnwidth}%
\includegraphics[width=1\columnwidth]{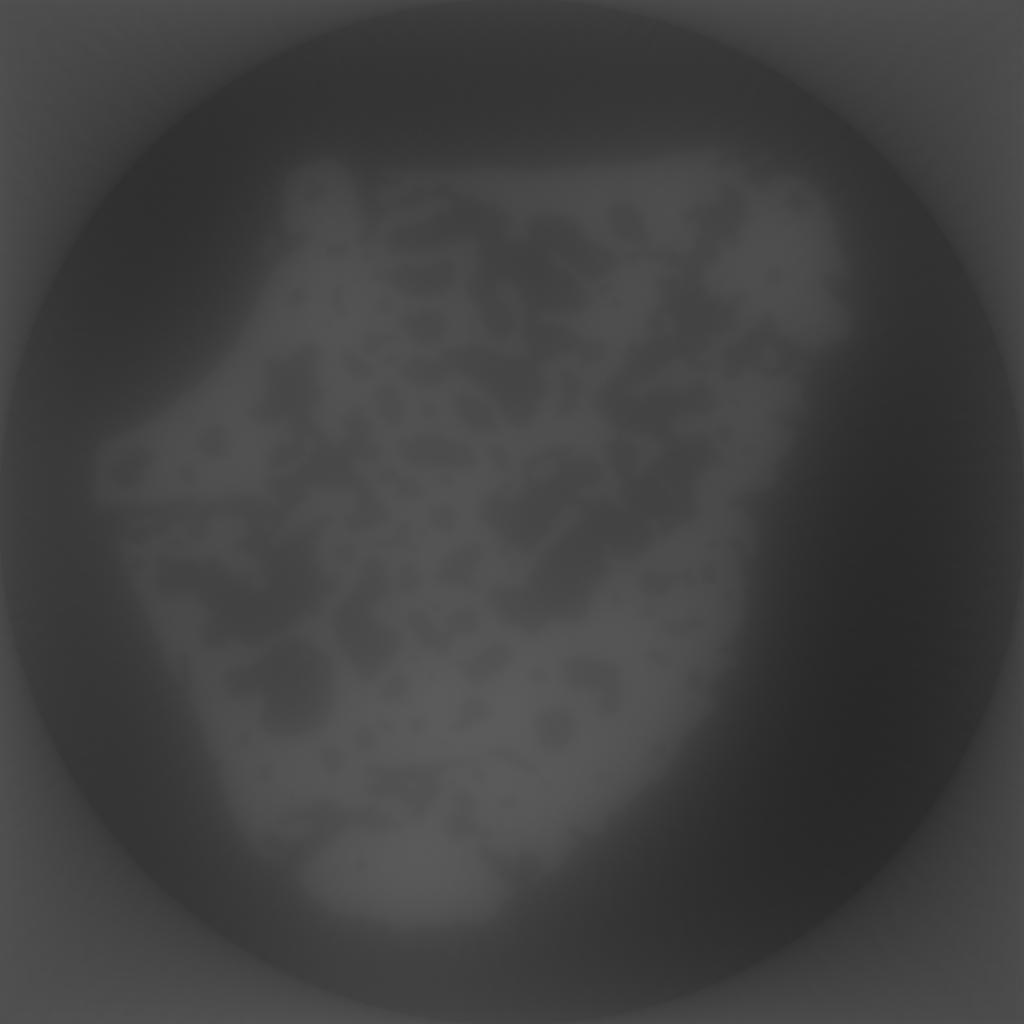}%
\end{minipage}\hfill{}%
\begin{minipage}[t]{0.2\columnwidth}%
\includegraphics[width=1\columnwidth]{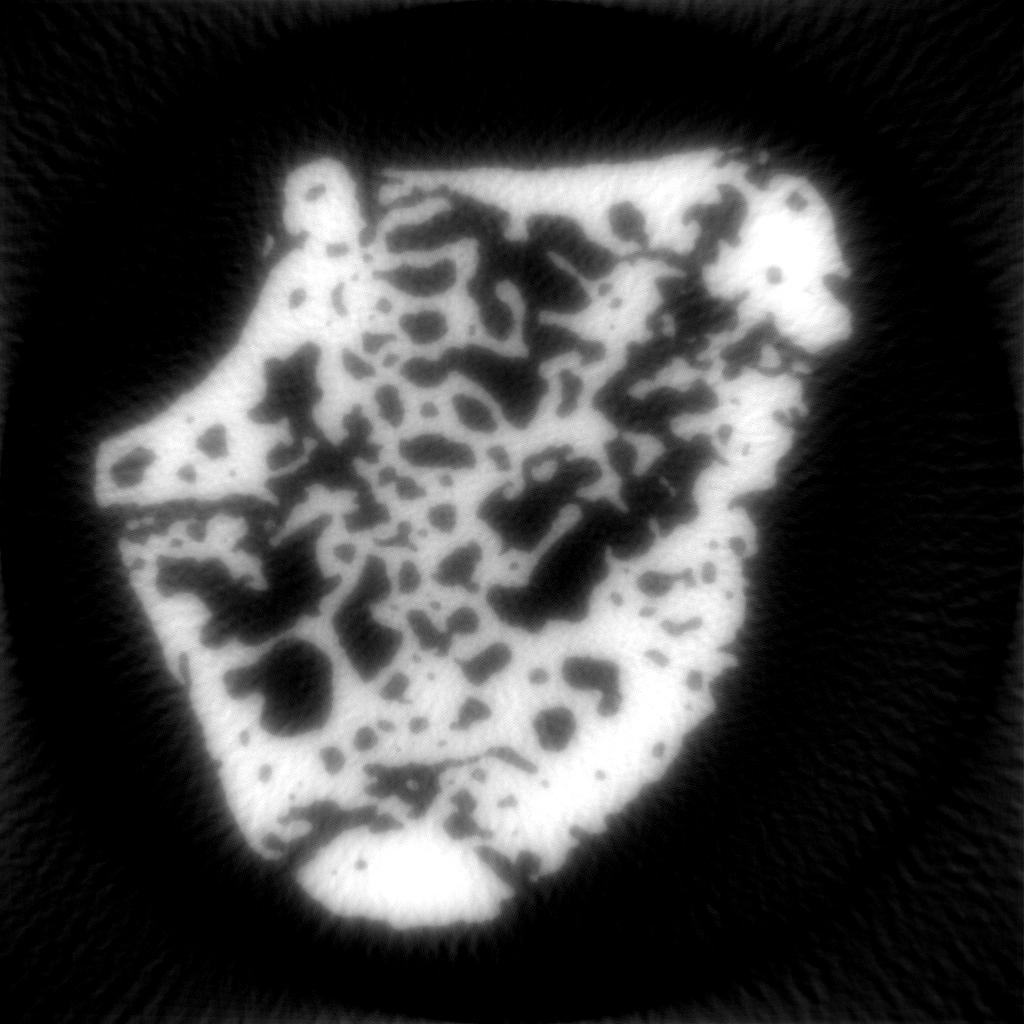}%
\end{minipage}\hfill{}%
\begin{minipage}[t]{0.2\columnwidth}%
\includegraphics[width=1\columnwidth]{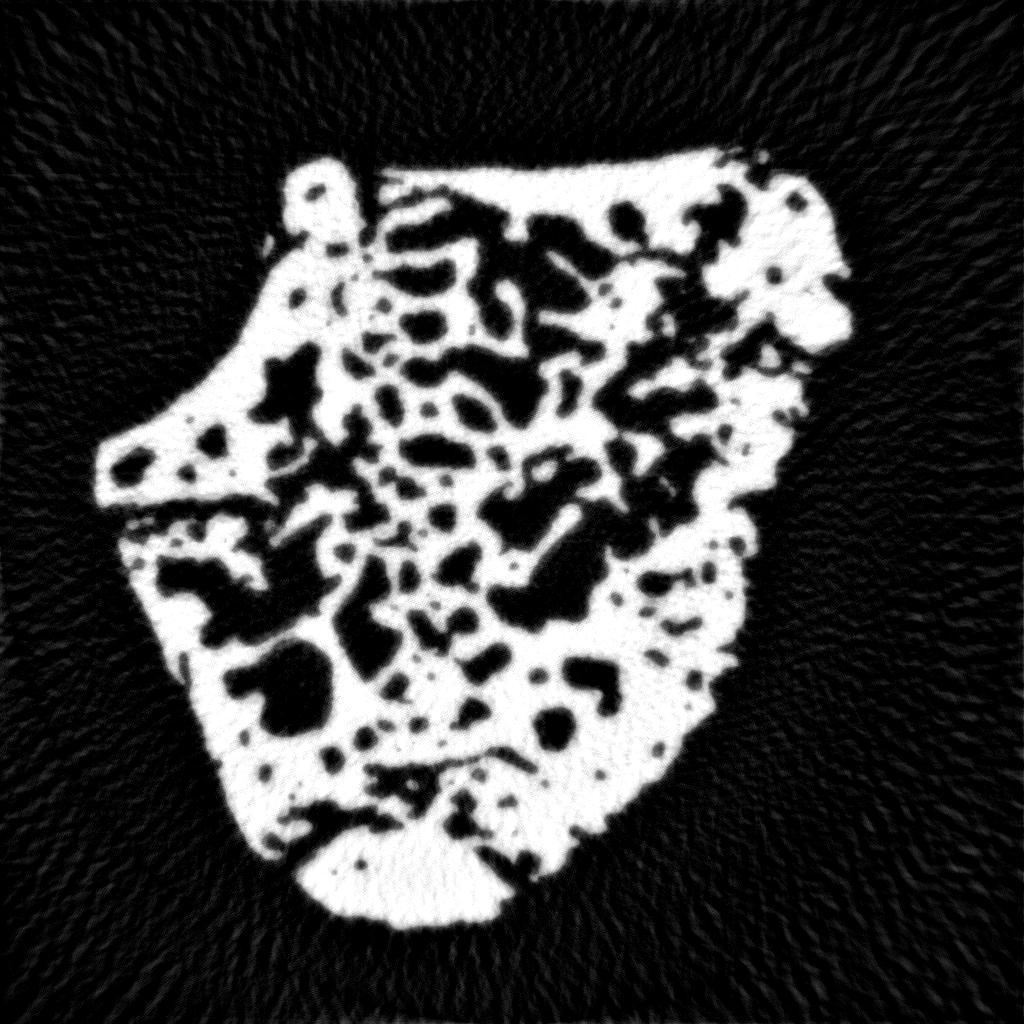}%
\end{minipage}\hfill{}%
\begin{minipage}[t]{0.2\columnwidth}%
\includegraphics[width=1\columnwidth]{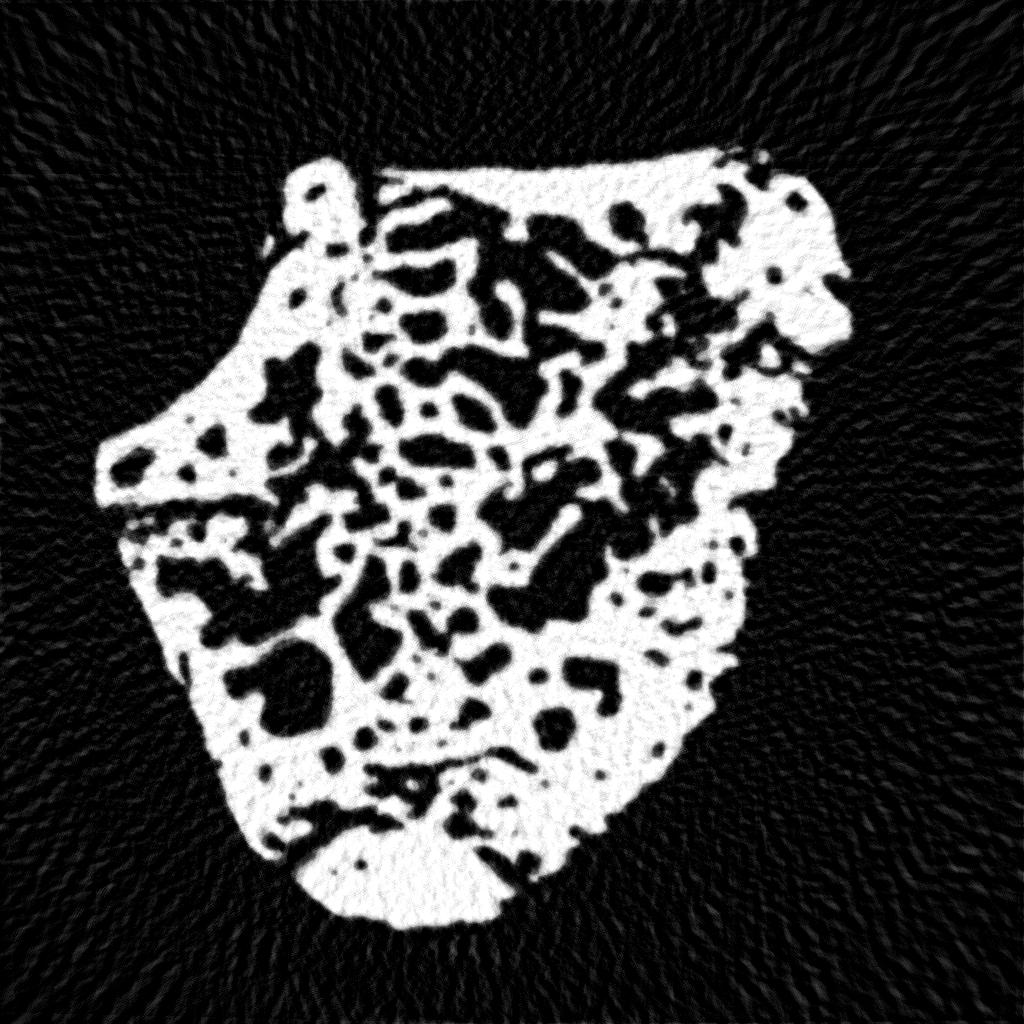}%
\end{minipage}\hfill{}%
\begin{minipage}[t]{0.2\columnwidth}%
\includegraphics[width=1\columnwidth]{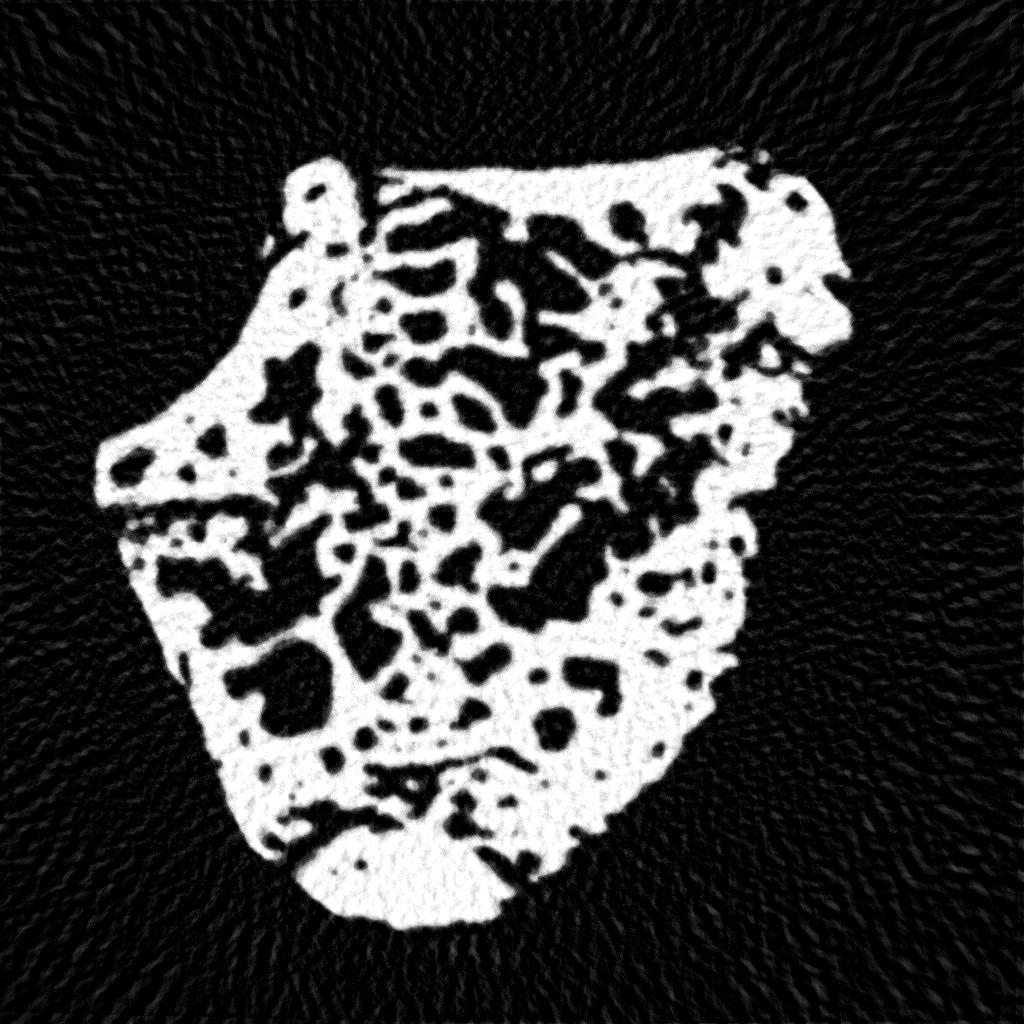}%
\end{minipage}%

\begin{minipage}[t]{0.2\columnwidth}%
\includegraphics[width=1\columnwidth]{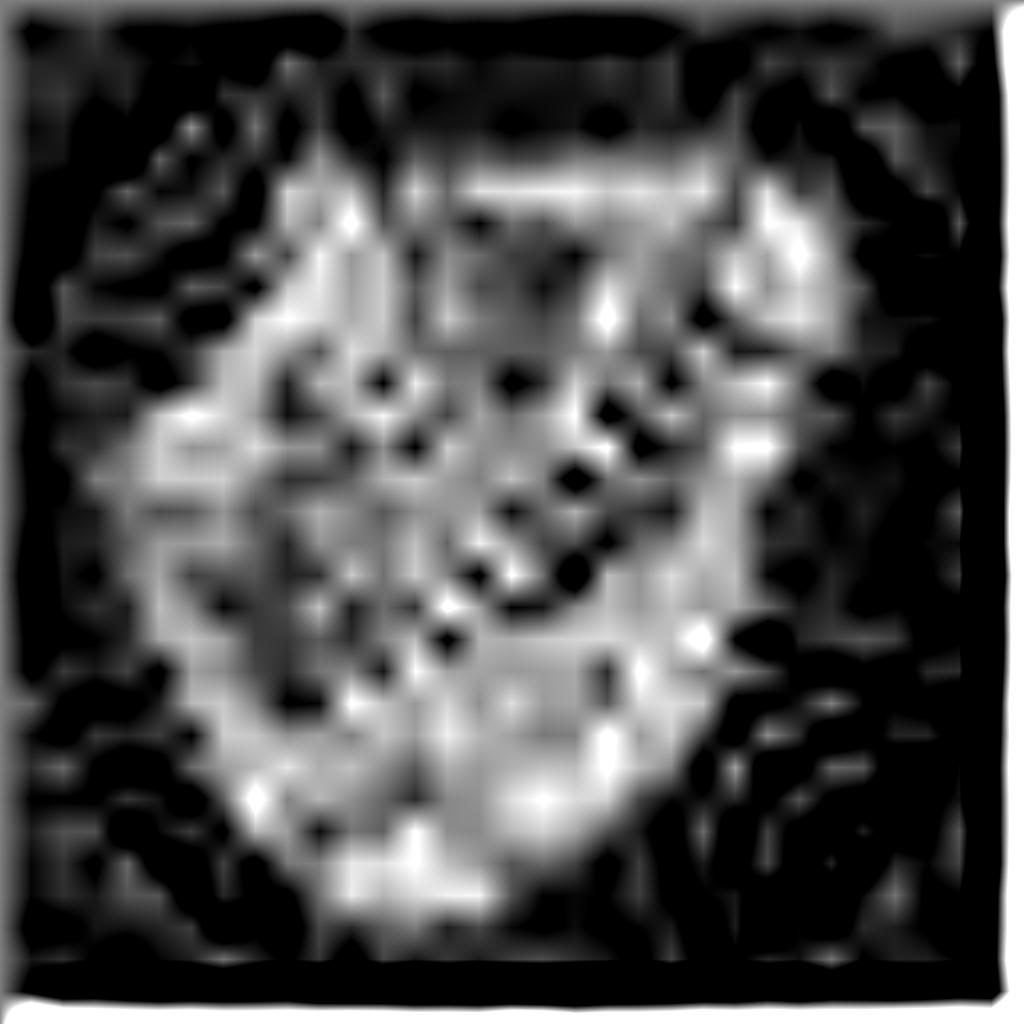}%
\end{minipage}\hfill{}%
\begin{minipage}[t]{0.2\columnwidth}%
\includegraphics[width=1\columnwidth]{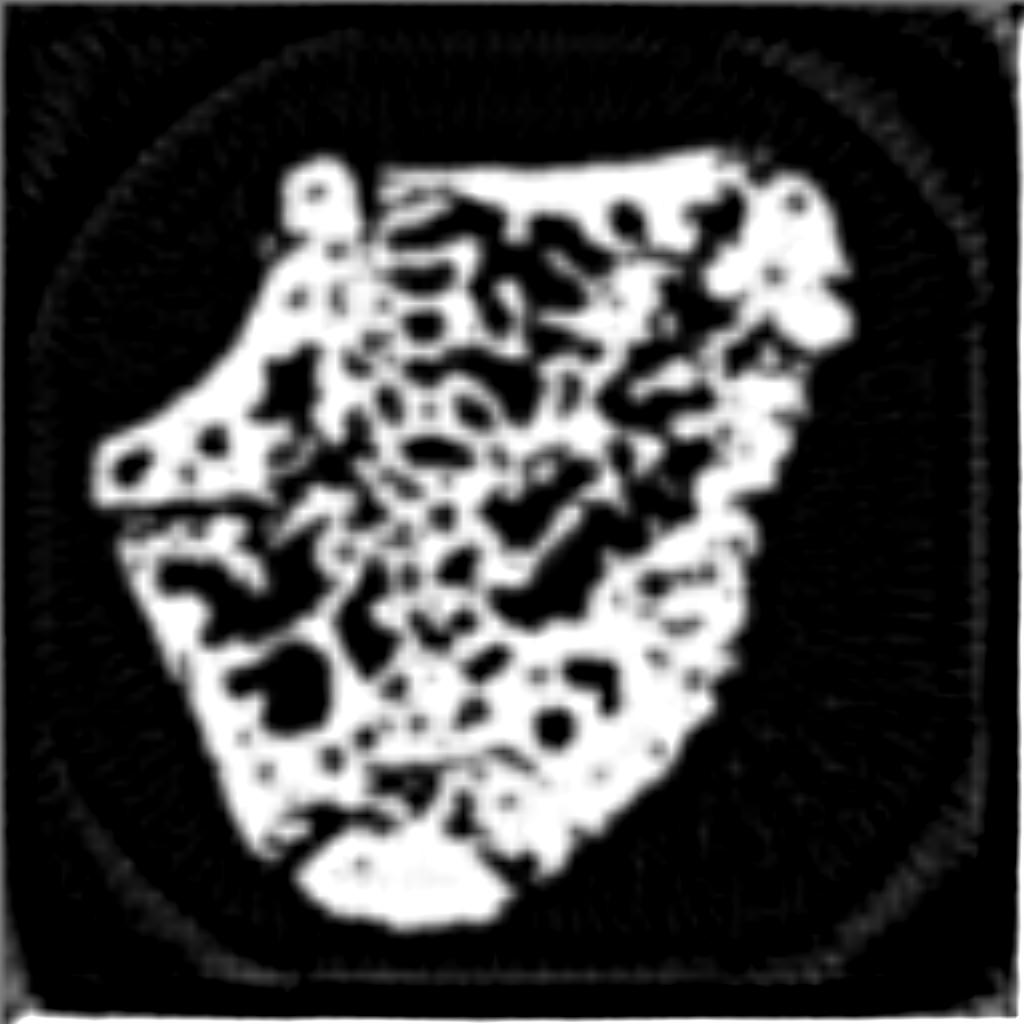}%
\end{minipage}\hfill{}%
\begin{minipage}[t]{0.2\columnwidth}%
\includegraphics[width=1\columnwidth]{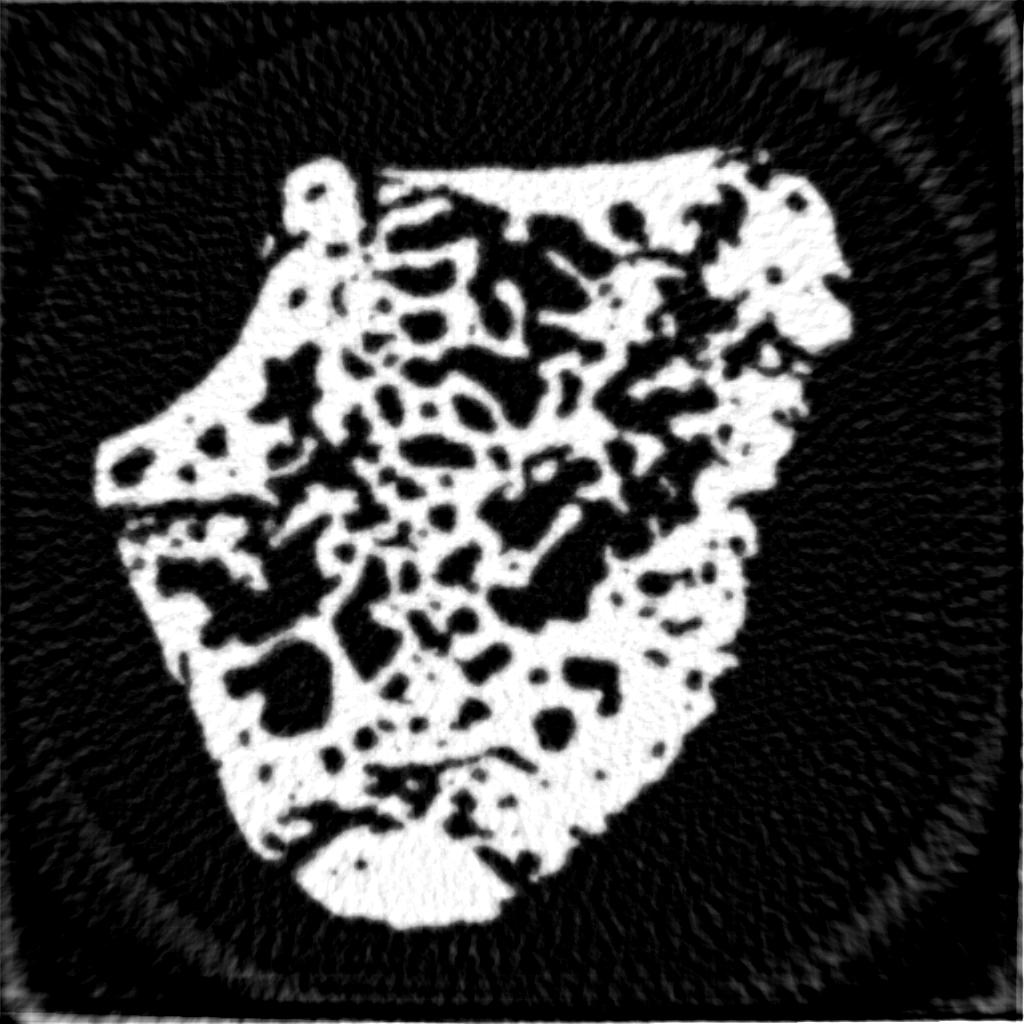}%
\end{minipage}\hfill{}%
\begin{minipage}[t]{0.2\columnwidth}%
\includegraphics[width=1\columnwidth]{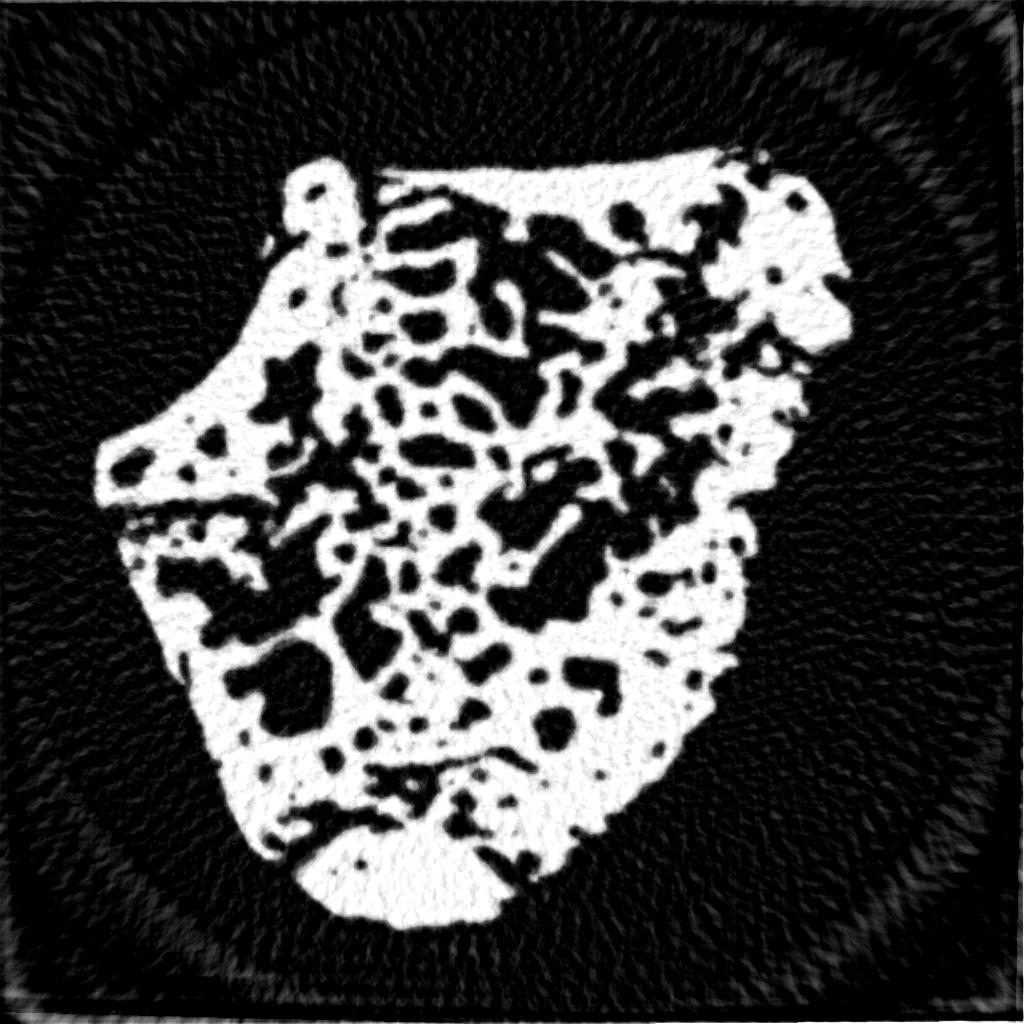}%
\end{minipage}\hfill{}%
\begin{minipage}[t]{0.2\columnwidth}%
\includegraphics[width=1\columnwidth]{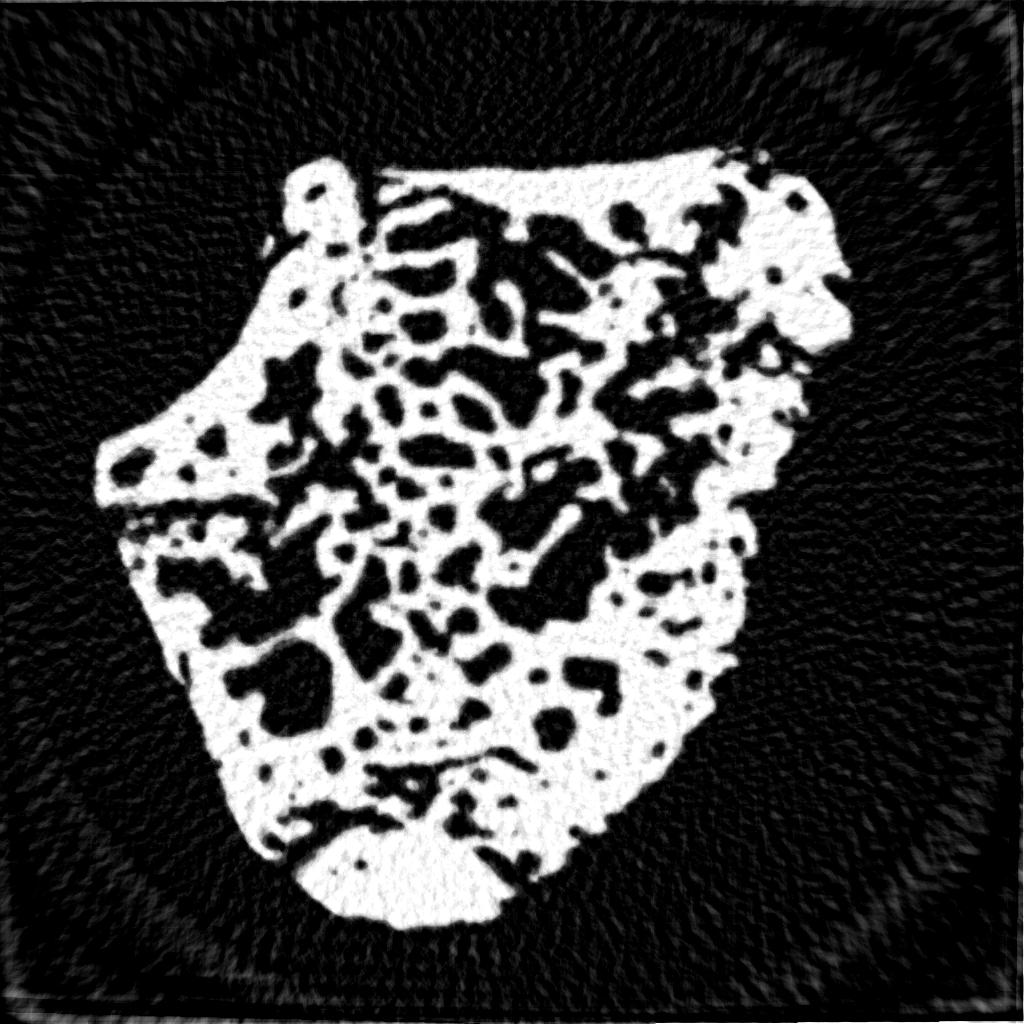}%
\end{minipage}%

\caption{\textbf{Single and multilevel reconstructions.} Comparison of gradient descent (top), L-BFGS (middle), and Algorithm \ref{alg:Two-level-algorithm} (bottom) applied recursively over 6 levels after 
$k=1,5,10,20,50$ iterations for the problem \eqref{eq:tomography-unconstrained}. Multilevel reconstructions yield results closer to the original and outperform BFGS.
}
\label{fig:bone-reconstruction}
\end{figure}

\noindent
\textbf{Coarse model.}
The core idea is to define the \emph{coarse grid model}, introduced in \cite{nash_multigrid_2000},
 analogous to a standard local approximation. It is defined at each step $k$ based on the current iterate as
$y_{k}\in{\R}^{n}$
\begin{align}
\min_{x\in C\subset\mathbb{R}^{N}}\psi_k(x) & :=g(x)+\left\langle v_{k},x-Ry_{k}\right\rangle 
,\quad v_{k}:=R\nabla f(y_{k})-\nabla g(Ry_{k}),
\label{eq:coarse-model-1}
\end{align}
where $\psi_k $ is a linear modification of the coarse objective $ g $. The linear term $ v_k $ 
represents the difference between the restricted gradient $ \nabla f $ evaluated at $ y_k $ 
and the gradient of the coarse objective $ \nabla g $ evaluated at $ Ry_k $. Consequently, the model 
incorporates both algebraic and geometric gradient information.  

For the \emph{initial} coarse-grid iterate $x_{k}:=Ry_{k}$, the gradient of the coarse model 
satisfies 
\begin{equation}
\nabla\psi_k(x_k)=R\nabla f(y_{k}). \label{eq:first-order-coherence}
\end{equation}
This property, known as the \emph{first-order coherence condition}, ensures that a critical point 
on the fine grid remains critical when transferred to the coarse grid. 

The coarse model is designed to efficiently compute a descent direction using fewer coarse-grid variables. 
The fine-grid update is then performed as
\begin{align}
y_{k+1} & =y_{k}+\alpha_{k} d_{k},\quad\alpha_{k}>0,\qquad d_{k}=P(x_{k}^+-Ry_{k}),\label{eq:coarse-update}
\end{align}
where $x_{k}^+$ is an \emph{approximate} solution to the coarse model (\ref{eq:coarse-model-1}) 
satisfying $\psi_k(x_k^+)< \psi_k(x_k)$.
The step size $\alpha_{k}>0$ is determined via a suitable \emph{line search} strategy.

\vspace{1mm}
\noindent
\textbf{Relation to second order models.}
 For $x_{k}:=Ry_{k}$ and $R=P^{\top}$,  the coarse model \eqref{eq:coarse-model-1} can be rewritten as
\begin{equation}\label{eq:coarse-model-Breg} 
\psi_k(x) =g(x_{k})+\left\langle \nabla f(y_{k}),P(x-x_{k})\right\rangle +D_{g}(x,x_{k})
\end{equation}
where the  Bregman divergence is
\begin{align*}
D_{g}(x,x_{g}) & =g(x)-g(x_{k})-\left\langle \nabla g(x_{k}),x-x_{k}\right\rangle 
  =\frac{1}{2}\left\langle x-x_{k},\nabla^{2}g(\tilde{x})(x-x_{k})\right\rangle 
\end{align*}
with $\tilde{x} = (1-t)x+tx_{k}$ for some ${t\in[0,1]}$.
Thus, the coarse model resembles the quadratic approximation used in \emph{single-level} optimization
\begin{equation}\label{eq:local-approx}
q_{k}(y)=\left\langle \nabla f(y_k),y-y_k\right\rangle 
+\frac{1}{2}\left\langle y-y_k ,H_{k} (y-y_k)\right\rangle ,
\end{equation}
where $H_{k}\succ 0$ approximates $\nabla^{2}f(y_k)$.
In contrast, the coarse model $\psi_k$ incorporates first order information
of the fine objective $f$, and second order information of the coarse
objective $g$.

% \begin{lemma}
% \label{lem:coarse-descent-direction}If the coarse objective $g$
% is convex, then the coarse direction step is a descent direction for
% $f$ at $y_{k}$. In particular, in the $k$-th iteration,
% \begin{align*}
% \left\langle \nabla f_{k},\hat{d}_{k}\right\rangle  & \leq\psi_{\ast}-\psi(x_{0})\leq0.
% \end{align*}
% \end{lemma}

% \begin{remark}
% By using the above model recursively, we can apply it to more than
% 2 levels. In particular, for levels $\ell=0,...,r$ with objectives $f_\ell$ and incumbent solutions $x_{\ell,k}\in\mathbb{R}^{n_\ell}$, we have the model:
% \begin{align}
% 	\min_{x\in C\subset\mathbb{R}^{n_{\ell-1}}}\psi_{\ell-1,k}(x) & :=f_{\ell-1}(x)
% 	+\left\langle v_{k},x-Rx_{\ell,k}\right\rangle 
%   \label{eq:multilevel-nash-1} \\
% 	\quad v_{k} & :=-\nabla f_{\ell-1}(Rx_{\ell,k})+R\nabla f(x_{\ell,k}) 
%   \label{eq:multilevel-nash-2}
% \end{align}
% Then, we have $v_{0}\equiv0$ on level $\ell=0$, and $v_{\ell}=\sum_{i=1}^{\ell}v_{i}$
% on level $\ell$.
% \end{remark}

\vspace{1mm}
\noindent
\textbf{Alternative coarse model.}
The authors in \cite{ho_newton-type_2019} define the  coarse model as
\begin{align}
\label{eq:algebraic-coarse}
\varphi_k(x) & = \left\langle R \nabla f(y_{k}),x-x_{k}\right\rangle	+ \frac{1}{2} \langle x - x_k,  
\underbrace{R \nabla^2 f(y_k) P }_{:=Q_k} (x - x_k)\rangle.
\end{align}
The key difference between \eqref{eq:coarse-model-Breg} and \eqref{eq:algebraic-coarse} is that the latter 
explicitly computes the fine-grid Hessian $\nabla ^2 f$ at each iteration, which makes it impractical for
very large problems. However, the update in \eqref{eq:coarse-update}, $y_{k+1}  = y_k + \alpha_k {d}_k $ 
can be computed using a Newton-like direction $ {d}_k=-PQ_k^{-1}R\nabla f(y_{k})$, which corresponds to 
solving \eqref{eq:algebraic-coarse}. This enables the authors to establish (sub)linear convergence and a 
finite number of coarse correction steps. Similar results for the coarse model \eqref{eq:coarse-model-1} 
are presented in Section \ref{sec:Rate-of-Convergence}.

\vspace{1mm}
\noindent
\textbf{Coarse correction descent.} In view of first-order coherence \eqref{eq:first-order-coherence} of the coarse model $\psi_k$, we obtain the following result.
% \begin{lemma}
% \label{lem:coarse-descent-direction}
% If $g$ is a convex function, then the coarse correction in \eqref{eq:coarse-update} povides a descent direction for $f$
% at $y_k$. Moreover, if in the $k$-th iteration $d_{k}=P(x_\ast-x_{k})$, with $x_\ast$ a solution of the coarse model \eqref{eq:coarse-model-Breg}, 
% \begin{equation*}
%   \left\langle \nabla f(y_k),d_k\right\rangle \leq\psi_k(x_\ast)-\psi_k(Ry_k)\leq 0.
% \end{equation*}
% \end{lemma}
\begin{lemma}
\label{lem:coarse-descent-direction}
If $g$ is a convex function, then the update in \eqref{eq:coarse-update} defines a descent direction for $f$ at $y_k$.  
Moreover, if in the $k$-th iteration, the search direction is given by $d_k = P(x_\ast - x_k)$, where $x_\ast$ is a solution of the coarse model \eqref{eq:coarse-model-Breg}, then  
\begin{equation*}
  \left\langle \nabla f(y_k),d_k\right\rangle \leq\psi_k(x_\ast)-\psi_k(Ry_k)\leq 0.
\end{equation*}
\end{lemma}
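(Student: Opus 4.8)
The plan is to reduce both claims to a single identity that converts the fine-grid inner product $\langle \nabla f(y_k), d_k\rangle$ into a coarse-grid quantity involving $\nabla\psi_k$, after which convexity of $\psi_k$ does the rest. First I would use the Galerkin condition $R=P^\top$ (with $c=1$) to move the prolongation across the inner product. Writing $d_k=P(x_k^+ - Ry_k)$ with $x_k:=Ry_k$, this gives
\[
\langle \nabla f(y_k), d_k\rangle = \langle P^\top\nabla f(y_k),\, x_k^+ - x_k\rangle = \langle R\nabla f(y_k),\, x_k^+ - x_k\rangle .
\]
I would then apply the first-order coherence condition \eqref{eq:first-order-coherence}, namely $R\nabla f(y_k)=\nabla\psi_k(x_k)$, to rewrite the right-hand side as $\langle \nabla\psi_k(x_k),\, x_k^+ - x_k\rangle$. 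This is the crucial step: it expresses the fine-grid directional derivative purely through the gradient of the coarse model at its initial iterate $x_k=Ry_k$.

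For the descent claim I would observe that $\psi_k = g + \langle v_k,\,\cdot - Ry_k\rangle$ differs from $g$ only by an affine term, hence is convex whenever $g$ is. The gradient inequality for convex functions then yields $\psi_k(x_k^+)\ge \psi_k(x_k)+\langle \nabla\psi_k(x_k),\,x_k^+ - x_k\rangle$, that is, $\langle \nabla\psi_k(x_k),\,x_k^+ - x_k\rangle \le \psi_k(x_k^+)-\psi_k(x_k)$. Combining this with the identity above and the defining property $\psi_k(x_k^+)<\psi_k(x_k)$ of the approximate coarse solution gives $\langle \nabla f(y_k),d_k\rangle \le \psi_k(x_k^+)-\psi_k(x_k) < 0$, so $d_k$ is a descent direction for $f$ at $y_k$.

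For the quantitative bound I would repeat the same chain with $x_k^+$ replaced by the exact minimizer $x_\ast$ of \eqref{eq:coarse-model-Breg}, obtaining $\langle \nabla f(y_k),d_k\rangle \le \psi_k(x_\ast)-\psi_k(Ry_k)$. The final inequality $\psi_k(x_\ast)-\psi_k(Ry_k)\le 0$ then follows immediately from optimality of $x_\ast$, since $Ry_k=x_k$ is feasible for the coarse problem. I do not anticipate a genuine obstacle here, as the argument is a short sequence of identities; the one point requiring care is the bookkeeping between the two regimes: the weaker hypothesis $\psi_k(x_k^+)<\psi_k(x_k)$ already suffices for the descent-direction statement, whereas the sharper sandwich bound genuinely relies on $x_\ast$ being a global minimizer (so that feasibility of $Ry_k$ forces $\psi_k(x_\ast)\le\psi_k(Ry_k)$) rather than merely a point of decrease.
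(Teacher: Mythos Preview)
Your argument is correct and essentially the same as the paper's: the paper uses the Bregman reformulation \eqref{eq:coarse-model-Breg} to write $\psi_k(x)=\psi_k(x_k)+\langle\nabla f(y_k),P(x-x_k)\rangle+D_g(x,x_k)$ and then invokes $D_g\ge 0$, which is exactly your gradient inequality for the convex function $\psi_k$ at $x_k$ after applying $R=P^\top$ and first-order coherence. The only difference is cosmetic---you route through $\nabla\psi_k(x_k)$ explicitly, whereas the paper keeps the Bregman divergence of $g$ visible---but the underlying inequality is identical.
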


\begin{proof} By \eqref{eq:coarse-model-Breg} and 
$\psi_k(x_k) =g(x_{k})$ we have for arbitrary $x$
\begin{align*}
\psi_k(x) & =\psi_k(x_{k})+\left\langle \nabla f(y_{k}),P(x-x_{k})\right\rangle +\underbrace{D_{g}(x,x_{k})}_{\ge 0}\\ &\ge \psi_k(x_{k})+\left\langle \nabla f(y_{k}),P(x-x_{k})\right\rangle,
\end{align*}
 where the last inequality follows by convexity of $g$.
% There holds
% \begin{align*}
% \left\langle\nabla f(y_k),d_k \right\rangle 
% &= \nabla f(y_k)^\top R^\top (x_\ast - Ry_k) = (R \nabla f(y_k))^\top(x_{\ast}-Ry_k) \\
% &\overset{\eqref{eq:first-order-coherence}}{=} \nabla \psi_k(Ry_k)^\top(x_\ast - Ry_k) \leq \psi_k(x_\ast) - \psi_k(Ry_k)
% \end{align*}
% where the last inequality follows by convexity of $\psi_k$.
\end{proof}

\vspace{1mm}
\noindent
\textbf{Coarse correction condition.}
We adapt the criteria from \cite{wen_line_2010} for \emph{unconstrained} optimization
\begin{align*}
\|P^{\top}\nabla f(y)\|_{\ast} & \geq\kappa\|\nabla f(y)\|_{\ast}
\quad\text{and}\quad\|P^{\top}\nabla f(y)\|_{\ast}>\epsilon,
\end{align*}
where $\kappa\in(0,\min(1,\|P\|_{\ast}))$ and $\epsilon\in(0,1)$. 
These conditions prevent reliance on the coarse model when the direction  $(x_{k}^+-Ry_{k})$
is nearly zero.

\vspace{1mm}
\noindent
\textbf{Box constrained coarse model.}
Given $x_k := Ry_k$, the coarse model \eqref{eq:coarse-model-1} can be extended to the box-constrained 
problem $\min_{y \in [l, u]} f(y)$ as follows
\begin{equation}
\min_{x\in \mathbb{R}^N}\psi_k(x)\qquad\text{subject to}\quad l_{k,P}\leq x\leq u_{k,P},
\label{eq:box-coarse-model}
\end{equation}
where the bounds are redefined as in \cite{gratton_recursive_2008}
\begin{align}
\left(l_{k,P}\right)_{j} & =x_{k,j}+\frac{1}{\|P\|_{\infty}}\max_{i=1,\dots n}\begin{cases}
(l-y_{k})_{i}, & P_{ij}>0,\\
(y_{k}-u)_{i}, & P_{ij}<0,
\end{cases}\label{eq:coarse-box-lower}\\
\left(u_{k,P}\right)_{j} & =x_{k,j}+\frac{1}{\|P\|_{\infty}}\min_{i=1,\dots,n}\begin{cases}
(u-y_{k})_{i}, & P_{ij}>0,\\
(y_{k}-l)_{i}, & P_{ij}<0,
\end{cases}\label{eq:coarse-box-upper}
\end{align}
and accommodates negative elements in $P$, as in cubic interpolation. The next result ensures that 
feasibility is preserved under prolongation.

\begin{lemma}[{\cite[Lemma 4.3]{gratton_recursive_2008}}]
\label{lem:restricted-box}
Let $l\leq y_k\leq u$ and $x_k=Ry_k$. Let $P:\mathbb{R}^{N}\rightarrow\mathbb{R}^{n}$ and
$l_{P,k}$ and $u_{P,k}$ from \eqref{eq:coarse-box-lower}, \eqref{eq:coarse-box-upper}. Then 
\begin{align*}
	l\leq y_k+P(x_k-x)\leq u,\qquad \forall x\in[l_{P,k},u_{P,k}].
\end{align*}
\end{lemma}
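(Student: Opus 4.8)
The plan is to reduce the vector inclusion to $n$ scalar inequalities, one per fine coordinate, and to verify each by splitting the prolongation sum according to the sign of the entries of $P$. Writing the prolongated step componentwise, for every row index $i\in\{1,\dots,n\}$ it suffices to show that
\[
(l-y_k)_i \;\le\; \sum_{j=1}^{N} P_{ij}\,(x_j-x_{k,j}) \;\le\; (u-y_k)_i,
\]
since this is exactly the assertion that the $i$-th entry of $y_k+P(x-x_k)$ lies in $[l_i,u_i]$ (the sign convention being the one induced by the coarse update $d_k=P(x_k^+-Ry_k)$ in \eqref{eq:coarse-update}). First I would rewrite the membership $x\in[l_{P,k},u_{P,k}]$ coordinatewise as bounds on $x_j-x_{k,j}$, noting that the offsets $(l_{P,k})_j-x_{k,j}$ and $(u_{P,k})_j-x_{k,j}$ are precisely the scaled $\max$/$\min$ terms from \eqref{eq:coarse-box-lower} and \eqref{eq:coarse-box-upper}; because $l\le y_k\le u$, these offsets are non-positive and non-negative, respectively, so $x_{k,j}$ always lies in the coarse interval.

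For the core estimate I would bound the sum term by term, separating the indices with $P_{ij}>0$ from those with $P_{ij}<0$ (entries $P_{ij}=0$ contribute nothing and are dropped). The decisive structural point is that the piecewise definition of the coarse box selects, for each $j$, the branch compatible with the sign of $P_{ij}$: for the upper bound, a positive $P_{ij}$ picks up the margin $\tfrac{1}{\|P\|_{\infty}}(u-y_k)_i$ through the $\min$ defining $u_{P,k}$, while a negative $P_{ij}$ picks up $\tfrac{1}{\|P\|_{\infty}}(y_k-u)_i$ through the $\max$ defining $l_{P,k}$; the lower bound is handled symmetrically with $l$ in place of $u$. Multiplying each coordinate bound by $P_{ij}$ — and using that a negative factor reverses the inequality — I would show every term is dominated above by $\tfrac{|P_{ij}|}{\|P\|_{\infty}}(u-y_k)_i$ and below by $\tfrac{|P_{ij}|}{\|P\|_{\infty}}(l-y_k)_i$.

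Summing over $j$ then factors out $(u-y_k)_i$ (respectively $(l-y_k)_i$) together with the scalar $\tfrac{1}{\|P\|_{\infty}}\sum_j |P_{ij}|$. I would finish by invoking that $\|P\|_{\infty}$ is the maximal absolute row sum, so $\sum_j |P_{ij}|\le\|P\|_{\infty}$ for every $i$, i.e.\ this factor is at most $1$. Since $(u-y_k)_i\ge 0$ and $(l-y_k)_i\le 0$, scaling these margins by a number in $[0,1]$ keeps the step inside $[(l-y_k)_i,(u-y_k)_i]$, which is exactly the claimed bound.

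I expect the main obstacle to be the sign bookkeeping rather than any deep estimate: one must check that for each $j$ the branch chosen in the $\max$/$\min$ is the one matching the sign of $P_{ij}$, and that multiplying through by a negative $P_{ij}$ flips the inequalities so as to land on the correct side of the box. The one conceptual point that must not be missed is that the definite signs of the feasibility margins $(l-y_k)$ and $(u-y_k)$, guaranteed by $l\le y_k\le u$, are precisely what turns the contraction factor $\tfrac{1}{\|P\|_{\infty}}\sum_j|P_{ij}|\le 1$ into preserved feasibility, and not merely a norm estimate.
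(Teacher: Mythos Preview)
The paper does not supply its own proof of this lemma; it merely quotes the result from \cite[Lemma~4.3]{gratton_recursive_2008}. So there is nothing to compare against directly. Your componentwise argument --- split the prolongated step $\sum_j P_{ij}(x_j-x_{k,j})$ according to the sign of $P_{ij}$, use that the $\max$/$\min$ in \eqref{eq:coarse-box-lower}--\eqref{eq:coarse-box-upper} is taken over the row index $i$ so that the branch matching $\operatorname{sign}(P_{ij})$ always participates, and finish with $\sum_j|P_{ij}|\le\|P\|_\infty$ together with the definite signs of $(l-y_k)_i\le 0\le(u-y_k)_i$ --- is correct and is exactly the standard proof one finds in the cited reference.

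One point worth making explicit in your write-up: you are proving $l\le y_k+P(x-x_k)\le u$, whereas the lemma as printed has $P(x_k-x)$. You flag this as a ``sign convention'' tied to the coarse update \eqref{eq:coarse-update}, and you are right to side with the update: the printed version is not in general true (already in the trivial case $n=N=1$, $P=R=1$, it would force $y_k\le(l+u)/2$). The coarse box \eqref{eq:coarse-box-lower}--\eqref{eq:coarse-box-upper} is designed precisely so that the \emph{forward} increment $P(x-x_k)$ stays inside the fine margins, which is also what is used immediately afterwards to justify taking $\alpha_k=1$. So state clearly that you are proving the version consistent with \eqref{eq:coarse-update}, rather than leaving it as a parenthetical.
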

To define a suitable coarse correction condition, we follow \cite[Note 2]{gratton_recursive_2008} and set
\begin{align*}
a_{k} & :=\left\Vert \Pi_{f}[y_{k}-\nabla f(y_{k})]-y_{k}\right\Vert _{\ast},
\qquad b_{k}:=\left\Vert \Pi_{\psi_{k}}[x_{k}-\nabla\psi(x_{k})]-x_{k}\right\Vert _{\ast},
\end{align*}
where $\Pi_{f}$ and $\Pi_{\psi_k}$ are Euclidean projection on the
constraint sets of $f$ and $\psi_{k}$ (\ref{eq:coarse-box-lower}, \ref{eq:coarse-box-upper}), respectively. 
A coarse correction step is taken if
$b_{k}\geq\kappa a_{k}$ 
for some constant $\kappa\in(0,1)$. 

The step size $\alpha_{k}$
in the update \eqref{eq:coarse-update} can be adapted to constraints using a projected line search, such as the Armijo rule along the projection arc \cite[pp. 227--230]{bertsekasNonlinearProgramming1995}. However, by Lemma \ref{lem:restricted-box}, we can directly set 
$\alpha_{k}=1$.

\section{Rate of Convergence}
\label{sec:Rate-of-Convergence}
In this section, we show that Algorithm \ref{alg:Two-level-algorithm} achieves a linear convergence rate and requires only finitely many coarse correction steps, under the assumption of gradient Lipschitz continuity and strong convexity of the objectives.
%\todo[inline]{Starting from this section, we distinguish between $\|\cdot\|$ and $\|\cdot\|_{\ast}$ because of strong-convexity estimates used in the proof of Lemma 4. This is trivial for the Euclidean norm $\|\cdot\|_2$, which is self-dual.}
%The analysis uses the Euclidean norm $\|\cdot\|_2$ and the following assumptions.

\vspace{1mm}
\noindent
\textbf{Sufficient decrease.}
A key requirement is that the step size generated by the \emph{Armijo condition},
\begin{align}
f(y_{k}+\alpha_{k}d_{k}) & \leq f(y_{k})+\rho_{1}\alpha_{k}\left[\nabla f(y_{k})\right]^{\top}d_{k},\quad\rho_{1}\in(0,0.5),\label{eq:armijo-condition}
\end{align}
must be bounded from below.

\begin{lemma}[{\cite[Theorem 3.3]{wen_line_2010}}]\label{lem:armijo-descent}
Suppose $\nabla f$ is Lipschitz continuous with constant $M_f$. Let $\rho_{1}\in(0,0.5)$ and $d_{k}$ a descent direction at $y_{k}$. Then step sizes 
\begin{align*}
\alpha\in[0,\hat{\alpha}], \quad\hat{\alpha} = \frac{2(\rho_{1}-1)d_{k}^{\top}\nabla f(y_{k})}{M_f\|d_{k}\|^{2}}.
\end{align*}
satisfy (\ref{eq:armijo-condition}). Furthermore, backtracking line search terminates with
\begin{align*}
\min\left(\alpha_{p},\frac{2\beta(\rho_{1}-1)d_{k}^{\top}\nabla f(y_{k})}{M_f\|d_{k}\|^{2}}\right) & \leq\alpha_{k}\leq\alpha_{\rho}
\end{align*}
where $\beta$ is the step size reduction parameter and $\alpha_{\rho}$
the initial step size.
\end{lemma}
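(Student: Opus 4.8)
The plan is to drive everything from the standard \emph{descent lemma} (quadratic upper bound) implied by Lipschitz continuity of $\nabla f$. First I would establish that for any point $y$, direction $d$, and $\alpha\geq 0$,
\[
f(y+\alpha d)\leq f(y)+\alpha\,\nabla f(y)^{\top}d+\tfrac{M_f}{2}\alpha^{2}\|d\|^{2}.
\]
This follows by writing $f(y+\alpha d)-f(y)=\int_{0}^{1}\nabla f(y+t\alpha d)^{\top}(\alpha d)\,dt$, adding and subtracting $\nabla f(y)^{\top}(\alpha d)$, and bounding the remaining integral by $\int_{0}^{1}M_f\,\|t\alpha d\|\,\|\alpha d\|\,dt=\tfrac{M_f}{2}\alpha^{2}\|d\|^{2}$ using Cauchy--Schwarz and the Lipschitz estimate.

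For the first claim I would insert this upper bound into the Armijo target \eqref{eq:armijo-condition}. It then suffices that
\[
\alpha\,\nabla f(y_{k})^{\top}d_{k}+\tfrac{M_f}{2}\alpha^{2}\|d_{k}\|^{2}\leq\rho_{1}\,\alpha\,\nabla f(y_{k})^{\top}d_{k}.
\]
Dividing by $\alpha>0$ and rearranging turns this into $\tfrac{M_f}{2}\alpha\|d_{k}\|^{2}\leq(\rho_{1}-1)\nabla f(y_{k})^{\top}d_{k}$, i.e.\ precisely $\alpha\leq\hat{\alpha}$. Here I would emphasize that $d_{k}$ being a descent direction gives $\nabla f(y_{k})^{\top}d_{k}<0$, which together with $\rho_{1}-1<0$ forces $\hat{\alpha}>0$; hence $[0,\hat{\alpha}]$ is nondegenerate and the case $\alpha=0$ holds trivially with equality.

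For the backtracking bounds, the upper bound $\alpha_{k}\leq\alpha_{\rho}$ is immediate, since backtracking only shrinks the initial trial step $\alpha_{\rho}$ by successive factors $\beta\in(0,1)$. Termination is guaranteed because the trial steps $\alpha_{\rho}\beta^{j}$ decrease geometrically to $0$ and thus eventually enter $[0,\hat{\alpha}]$, where Armijo holds by the first part. For the lower bound I would argue via the accepted index: the accepted step $\alpha_{k}$ is the first trial $\alpha_{\rho}\beta^{j}$ passing the Armijo test. If $j=0$, then $\alpha_{k}=\alpha_{\rho}\geq\min(\alpha_{\rho},\beta\hat{\alpha})$ trivially. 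If $j\geq 1$, the previously rejected trial $\alpha_{\rho}\beta^{j-1}$ failed Armijo, so by the first part it must satisfy $\alpha_{\rho}\beta^{j-1}>\hat{\alpha}$; multiplying by $\beta$ yields $\alpha_{k}=\beta\,(\alpha_{\rho}\beta^{j-1})>\beta\hat{\alpha}\geq\min(\alpha_{\rho},\beta\hat{\alpha})$. Recognizing $\beta\hat{\alpha}$ as the second argument of the stated minimum closes the bound.

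I do not expect a genuine obstacle: the result is classical. The only point demanding care is the backtracking lower bound, where one must cleanly separate the ``initial step already accepted'' case from the ``at least one reduction'' case, and invoke the contrapositive of the first part --- that any rejected step necessarily exceeds $\hat{\alpha}$ --- to convert a failure of Armijo into the quantitative estimate $\alpha_{k}>\beta\hat{\alpha}$.
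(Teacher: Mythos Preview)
Your argument is correct and is the standard derivation of this classical result. Note, however, that the paper does not supply its own proof of this lemma: it is quoted verbatim as \cite[Theorem~3.3]{wen_line_2010} and used as a black box in the proof of Lemma~\ref{lem:coarse-decrease}. Your approach---descent lemma from Lipschitz continuity of $\nabla f$, then the contrapositive argument on the last rejected backtracking trial---is precisely the textbook route one finds in the cited reference, so there is nothing to contrast. One minor remark: the paper's statement writes $\alpha_{p}$ in the lower bound where it clearly intends the initial step $\alpha_{\rho}$; your proof correctly treats it as such.
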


\vspace{1mm}
\noindent
\textbf{Coarse correction step.}
Next, we provide a lower bound for the decrease in the function value at the fine grid after a coarse correction step (Algorithm \ref{alg:Two-level-algorithm}, Line \ref{f-lesser-value}), under the following two assumptions.

\begin{assumption}\label{asu:hessian-bounded-eigenvalues}
Let $y_0$ denote the initial iterate. On the level sets $\mathcal{L}_{f}(y_{0})$ and $\mathcal{L}_{g}({Ry_{0}})$, the fine and coarse objectives have Lipschitz continuous gradient with constants $M_f$ and $M_g$, respectively, and are strongly convex with constants
 $m_f$ and $m_g$. 
%\todo[inline]{This formulation works with any norm $\|\cdot\|$. However, when not using the Euclidean norm, Lemma 4 either requires a coarse correction condition in the dual norm, or a dimension-dependent lower bound.}
\end{assumption}

\begin{assumption}\label{asu:coarse-exact-solution}
The coarse model (\ref{eq:coarse-model-1}) is solved exactly. In Algorithm \ref{alg:Two-level-algorithm},
replace Line \ref{mls-find-lesser-value} with ``Find $x_k^\ast = \argmin{\psi_k}$.''
\end{assumption}

\begin{lemma}[Sufficient decrease by coarse correction]\label{lem:coarse-decrease}
Suppose Assumption \ref{asu:hessian-bounded-eigenvalues} and \ref{asu:coarse-exact-solution} hold.
The coarse correction step ${d}_{k}$ in Algorithm \ref{alg:Two-level-algorithm} then
leads to a reduction in function value
\begin{align*}
f(y_{k})-f(y_{k}+\alpha_{k}{d}_{k}) & \geq\rho_{1}\beta\frac{m_{g}^{3}\kappa^{2}}{M_{f}M_{g}^{2}\omega^{2}}\|\nabla f(y_{k})\|_{\ast}^{2},
\end{align*}
where $\rho_{1},\kappa$ and $\beta$ are defined
in Algorithm \ref{alg:Two-level-algorithm} and 
$\omega:=\max\left\{ \|R\|,\|P\|\right\} $.
\end{lemma}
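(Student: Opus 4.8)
The plan is to turn the Armijo sufficient-decrease inequality \eqref{eq:armijo-condition} into a bound of the desired form by (i) lower-bounding the accepted step size through Lemma~\ref{lem:armijo-descent}, and then (ii) controlling the two remaining quantities, the directional derivative $-\langle\nabla f(y_k),d_k\rangle$ and $\|d_k\|$, in terms of $\|\nabla f(y_k)\|_\ast$. Write $\Delta_k:=-\langle\nabla f(y_k),d_k\rangle$, which is nonnegative by Lemma~\ref{lem:coarse-descent-direction}. Feeding the backtracking lower bound $\alpha_k\ge \tfrac{2\beta(1-\rho_1)\Delta_k}{M_f\|d_k\|^2}$ from Lemma~\ref{lem:armijo-descent} (on the branch where this bound, rather than the initial step $\alpha_\rho$, is active) into \eqref{eq:armijo-condition} gives
\begin{equation*}
f(y_k)-f(y_k+\alpha_k d_k)\ \ge\ \rho_1\alpha_k\Delta_k\ \ge\ \frac{2\rho_1\beta(1-\rho_1)}{M_f}\,\frac{\Delta_k^2}{\|d_k\|^2}.
\end{equation*}
Hence everything reduces to showing $\Delta_k^2/\|d_k\|^2\ \gtrsim\ \tfrac{m_g^2\kappa^2}{M_g^2\omega^2}\,\|\nabla f(y_k)\|_\ast^2$.

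Next I would exploit that, under Assumption~\ref{asu:coarse-exact-solution}, the exact coarse solution $x_k^\ast=\argmin\psi_k$ satisfies $\nabla\psi_k(x_k^\ast)=0$, while first-order coherence \eqref{eq:first-order-coherence} gives $\nabla\psi_k(x_k)=R\nabla f(y_k)$ at $x_k=Ry_k$. Since $\psi_k$ differs from $g$ only by a linear term, it inherits the $m_g$-strong convexity and $M_g$-Lipschitz gradient of $g$ on the relevant level set (Assumption~\ref{asu:hessian-bounded-eigenvalues}). Using $R=P^\top$ and $d_k=P(x_k^\ast-x_k)$, the directional derivative rewrites on the coarse grid as
\begin{equation*}
\Delta_k=\langle R\nabla f(y_k),\,x_k-x_k^\ast\rangle=\langle\nabla\psi_k(x_k)-\nabla\psi_k(x_k^\ast),\,x_k-x_k^\ast\rangle\ \ge\ m_g\|x_k-x_k^\ast\|^2,
\end{equation*}
which is the strong-convexity lower bound on $\Delta_k$.

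For the remaining factors I would bound the coarse displacement $\|x_k-x_k^\ast\|$ from below via $M_g$-smoothness and control $\|d_k\|$ from the transfer side via $\|P\|$. Lipschitz continuity of $\nabla\psi_k$ gives $\|R\nabla f(y_k)\|_\ast=\|\nabla\psi_k(x_k)-\nabla\psi_k(x_k^\ast)\|_\ast\le M_g\|x_k-x_k^\ast\|$, so $\|x_k-x_k^\ast\|\ge \tfrac{1}{M_g}\|R\nabla f(y_k)\|_\ast$, and the coarse-correction condition $\|P^\top\nabla f(y)\|_\ast\ge\kappa\|\nabla f(y)\|_\ast$ upgrades this to $\|x_k-x_k^\ast\|\ge \tfrac{\kappa}{M_g}\|\nabla f(y_k)\|_\ast$. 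Simultaneously $\|d_k\|\le\|P\|\,\|x_k^\ast-x_k\|\le\omega\|x_k-x_k^\ast\|$. Combining the three estimates,
\begin{equation*}
\frac{\Delta_k^2}{\|d_k\|^2}\ \ge\ \frac{m_g^2\|x_k-x_k^\ast\|^4}{\omega^2\|x_k-x_k^\ast\|^2}=\frac{m_g^2}{\omega^2}\|x_k-x_k^\ast\|^2\ \ge\ \frac{m_g^2\kappa^2}{M_g^2\omega^2}\|\nabla f(y_k)\|_\ast^2,
\end{equation*}
and substituting into the decrease inequality, using $2(1-\rho_1)\ge1$ for $\rho_1\in(0,0.5)$, yields the asserted sufficient-decrease bound; the surviving constant collects $\rho_1$ from Armijo, $\beta/M_f$ from the step-size bound, $\kappa^2$ from the coarse-correction condition, $\omega^{-2}$ from $\|P\|$, and the coarse-model curvature ratio $m_g^2/M_g^2$.

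I expect the main obstacle to be the bookkeeping of the previous paragraph: one must lower-bound $\Delta_k$ by \emph{strong convexity} yet lower-bound the displacement $\|x_k-x_k^\ast\|$ by \emph{smoothness}, pushing the two curvature inequalities in opposite directions, and then glue the coarse-grid estimates to the fine gradient solely through first-order coherence \eqref{eq:first-order-coherence}, the Galerkin identity $R=P^\top$, and the coarse-correction condition. Arranging these so that precisely $\kappa^2$, $\omega^{-2}$, $M_f^{-1}$ and the coarse curvature ratio survive—rather than spurious extra powers of $\|R\|$, $\|P\|$, or $M_g$—is the delicate part; by contrast the Armijo/step-size reduction and the descent property are routine given Lemmata~\ref{lem:armijo-descent} and~\ref{lem:coarse-descent-direction}.
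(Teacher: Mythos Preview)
Your argument is correct and follows the same strategy as the paper: feed the backtracking lower bound from Lemma~\ref{lem:armijo-descent} into the Armijo inequality, then control $\Delta_k$ and $\|d_k\|$ through the $m_g$-strong convexity and $M_g$-smoothness of $\psi_k$, first-order coherence \eqref{eq:first-order-coherence}, and the coarse-correction condition. The only difference is organizational: you keep $\|x_k-x_k^\ast\|$ as the common pivot in the ratio $\Delta_k^2/\|d_k\|^2$ and bound it once at the end, whereas the paper converts numerator and denominator separately to $\|\nabla\psi_k(x_k)\|_\ast$ (via the growth property and, respectively, strong convexity); your bookkeeping actually produces the curvature factor $m_g^{2}/M_g^{2}$ rather than the stated $m_g^{3}/M_g^{2}$, which is sharper whenever $m_g\le 1$ and hence implies the lemma in that regime.
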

%\todo[inline]{The result of Lemma \ref{lem:coarse-decrease} is formulated for the Euclidean norm $\|\cdot\|_2$. In the proof of Lemma 4, we then require the spectral norms $\|P\|_2$, $\|R\|_2$}

\begin{remark}
Compared to \cite[Lemma 3.1]{ho_newton-type_2019}, we observe cubic terms \( m_{g}^3 \) and quadratic terms \( M_{g}^2 \) in the reduction lower-bound for a coarse correction step.
However, these constants pertain to the \emph{coarse} model and may differ significantly -- by several orders of magnitude -- from those of the \emph{fine} model.
\end{remark}
\begin{remark}
The parameter $\kappa\in(0,1)$ dictates when the algorithm
performs a coarse correction step. Choosing $\kappa$ too small is
not desirable in terms of worst-case complexity. If $\kappa$ is chosen
too large, coarse correction steps are less likely.
\end{remark}

\vspace{1mm}
\noindent
\textbf{Linear rate of convergence.}
Lemma \ref{lem:coarse-decrease} on the coarse correction step, together with the following assumption from \cite{ho_newton-type_2019} on the fine correction step (Line \ref{mls-fine-correction} in Algorithm \ref{alg:Two-level-algorithm}), will ensure sufficient decrease and a linear rate of convergence.

\begin{assumption}\label{asu:fine-descent-condition}
For fine correction steps there exists a constant $\Lambda_{f}>0$ such that 
\begin{equation}
f(y_{k})-f(y_{k+1})\geq\Lambda_{f}\|\nabla f(y_{k})\|_{\ast}^{2}.
\label{eq:fine-decrease-bound}
\end{equation}
\end{assumption}
This holds for gradient descent, (inexact) Newton, and L-BFGS \cite{wen_line_2010}.
\begin{theorem}[Sufficient decrease for two level optimization]\label{thm:mls-decrease}
Suppose Assumption \ref{asu:hessian-bounded-eigenvalues}, \ref{asu:coarse-exact-solution} and \ref{asu:fine-descent-condition} hold.
The step $d_k$ in Algorithm \ref{alg:Two-level-algorithm} leads to a decrease in function value
\begin{align*}
f(y_k)-f(y_{k+1})\geq & \min\left\{ \Lambda_{f},\rho_{1}\beta\frac{m_{g}^{3}\kappa^{2}}{M_{f}M_{g}^{2}\omega^{2}}\right\}\|\nabla f(y_k)\|_{\ast}^{2},
\end{align*}
where the constant $\Lambda_{f}$ depends on the fine correction step chosen.
\end{theorem}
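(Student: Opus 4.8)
The plan is to proceed by a simple case distinction on which branch of Algorithm \ref{alg:Two-level-algorithm} is executed at iteration $k$, since the update $y_{k+1}=y_k+\alpha_k d_k$ is produced by exactly one of the two mutually exclusive branches: a coarse correction step (when the condition to use the coarse model is satisfied at $y_k$) or a fine correction step (otherwise). In each case I would invoke the corresponding decrease estimate already established, and then combine them by taking the worse of the two coefficients.

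First I would treat the coarse correction case. Here Assumptions \ref{asu:hessian-bounded-eigenvalues} and \ref{asu:coarse-exact-solution} are in force, so Lemma \ref{lem:coarse-decrease} applies verbatim and yields
\begin{align*}
f(y_k)-f(y_{k+1}) \geq \rho_1\beta\frac{m_g^3\kappa^2}{M_fM_g^2\omega^2}\|\nabla f(y_k)\|_{\ast}^2.
\end{align*}
Next I would treat the fine correction case, where Line \ref{mls-fine-correction} is executed. By Assumption \ref{asu:fine-descent-condition} the chosen fine-grid iteration satisfies the sufficient-decrease bound \eqref{eq:fine-decrease-bound}, i.e.
\begin{align*}
f(y_k)-f(y_{k+1}) \geq \Lambda_f\|\nabla f(y_k)\|_{\ast}^2.
\end{align*}

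Finally I would unify the two branches. In both cases the decrease is bounded below by one of the two coefficients multiplied by the common factor $\|\nabla f(y_k)\|_{\ast}^2$; since each of those coefficients is at least their minimum, the decrease in either branch is at least $\min\bigl\{\Lambda_f,\ \rho_1\beta\frac{m_g^3\kappa^2}{M_fM_g^2\omega^2}\bigr\}\|\nabla f(y_k)\|_{\ast}^2$, which is exactly the claimed bound and holds irrespective of which branch was taken. I do not anticipate a genuine obstacle here: the theorem is essentially a bookkeeping combination of Lemma \ref{lem:coarse-decrease} and Assumption \ref{asu:fine-descent-condition}, with all the analytic difficulty already absorbed into Lemma \ref{lem:coarse-decrease}. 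The only point requiring mild care is to confirm that the two branches are genuinely exhaustive and mutually exclusive, so that the single factor $\|\nabla f(y_k)\|_{\ast}^2$ can be pulled out and the minimum taken over the coefficients alone.
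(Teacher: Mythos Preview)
Your proposal is correct and matches the paper's approach: the paper does not even write out a separate proof for this theorem, presenting it as the immediate combination of Lemma~\ref{lem:coarse-decrease} (for coarse correction steps) and Assumption~\ref{asu:fine-descent-condition} (for fine correction steps), exactly as you argue by case distinction. Your observation that the analytic work is entirely absorbed into Lemma~\ref{lem:coarse-decrease} is precisely how the paper treats it.
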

Now the linear rate of convergence follows immediately.
\begin{corollary}\label{cor:mls-linear-rate}
Let $\left( y_{k}\right) _{k\geq0}$ be the sequence generated by
Algorithm \ref{alg:Two-level-algorithm} and $y_{\ast} = \argmin f$. Assume $M_{g}^{2}\geq m_{g}^{3}$ which holds for e.g. $M_{g}>1$, $m_{g}<1$. Then, 
\begin{align*}
f(y_k)-f(y_\ast) & \leq(1-2m_{f}\Lambda)^{k}[f(y_0)-f(y_\ast)].
\end{align*}
\end{corollary}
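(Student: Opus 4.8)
The plan is to upgrade the per-step \emph{sufficient decrease} of Theorem~\ref{thm:mls-decrease} into a \emph{geometric} decay of the optimality gap by invoking the gradient-domination property of strongly convex functions, and then to iterate. Abbreviate the constant of Theorem~\ref{thm:mls-decrease} by $\Lambda := \min\{\Lambda_{f},\,\rho_{1}\beta\,m_{g}^{3}\kappa^{2}/(M_{f}M_{g}^{2}\omega^{2})\}$ and set $e_{k}:=f(y_{k})-f(y_{\ast})\ge 0$. Theorem~\ref{thm:mls-decrease} then reads $e_{k}-e_{k+1}\ge \Lambda\,\|\nabla f(y_{k})\|_{\ast}^{2}$ for every $k\ge 0$, uniformly in whether a coarse or a fine correction was performed.

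First I would establish the Polyak--\L ojasiewicz bound $\|\nabla f(y_{k})\|_{\ast}^{2}\ge 2m_{f}\,e_{k}$, which follows from $m_{f}$-strong convexity of $f$ on $\mathcal{L}_{f}(y_{0})$ (Assumption~\ref{asu:hessian-bounded-eigenvalues}): minimizing the lower bound $f(z)\ge f(y_{k})+\langle\nabla f(y_{k}),z-y_{k}\rangle+\tfrac{m_{f}}{2}\|z-y_{k}\|^{2}$ over $z$, and using that the convex conjugate of $\tfrac{m_{f}}{2}\|\cdot\|^{2}$ equals $\tfrac{1}{2m_{f}}\|\cdot\|_{\ast}^{2}$, gives $f(y_{\ast})\ge f(y_{k})-\tfrac{1}{2m_{f}}\|\nabla f(y_{k})\|_{\ast}^{2}$, which is exactly the claim (the dual norm arising here matches the one in Theorem~\ref{thm:mls-decrease}). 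Substituting into the sufficient-decrease inequality yields $e_{k}-e_{k+1}\ge 2m_{f}\Lambda\,e_{k}$, i.e. $e_{k+1}\le(1-2m_{f}\Lambda)e_{k}$, and a one-line induction then produces $e_{k}\le(1-2m_{f}\Lambda)^{k}e_{0}$, the asserted estimate.

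The only point needing care --- and the reason the hypothesis $M_{g}^{2}\ge m_{g}^{3}$ is stated --- is to certify that $1-2m_{f}\Lambda$ is a bona fide contraction factor in $[0,1)$. The bound $<1$ is immediate from $m_{f}>0$ and $\Lambda>0$. The bound $2m_{f}\Lambda\le1$ can be read off a posteriori from $e_{k+1}\ge0$ together with $e_{k+1}\le(1-2m_{f}\Lambda)e_{k}$ and $e_{k}>0$; more explicitly, using $m_{f}\le M_{f}$, $\rho_{1}<\tfrac12$, $\beta,\kappa\in(0,1)$ and $\omega\ge1$, the coarse branch is bounded by $2m_{f}\,\rho_{1}\beta\,m_{g}^{3}\kappa^{2}/(M_{f}M_{g}^{2}\omega^{2})< m_{g}^{3}/M_{g}^{2}\le1$, the final inequality being precisely $M_{g}^{2}\ge m_{g}^{3}$ (which holds e.g. for $m_{g}<1<M_{g}$), while the fine branch satisfies $2m_{f}\Lambda_{f}\le1$ for the schemes of Assumption~\ref{asu:fine-descent-condition} (for gradient descent $\Lambda_{f}=1/(2M_{f})$, so $2m_{f}\Lambda_{f}=m_{f}/M_{f}\le1$); taking the minimum preserves the bound. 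I expect this bookkeeping on the constants to be the only mild obstacle, the convergence step itself being the textbook strong-convexity contraction.
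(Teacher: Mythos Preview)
Your argument is correct and follows exactly the paper's route: combine the sufficient decrease of Theorem~\ref{thm:mls-decrease} with the Polyak--\L ojasiewicz inequality $f(y_k)-f(y_\ast)\le \tfrac{1}{2m_f}\|\nabla f(y_k)\|_\ast^2$ from strong convexity, obtain $e_{k+1}\le(1-2m_f\Lambda)e_k$, and iterate. You even go further than the paper by explicitly justifying $2m_f\Lambda\le 1$ via the hypothesis $M_g^2\ge m_g^3$, which the paper's proof leaves implicit.
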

We denote the constant in Theorem \ref{thm:mls-decrease} by $\Lambda$. As shown below, Algorithm \ref{alg:Two-level-algorithm} takes a finite number of coarse correction steps when $f$ is coercive. We then have $\Lambda_f \equiv \Lambda$ for $k$ sufficiently large.

% \todo[inline]{For a constant $\epsilon>0$ and $K$ iterations such that $f(y_k) - f_{\ast} \leq \epsilon$, we then have $K \leq \mathcal{O}\left(\frac{m_fm_g^3}{M_f M_g^2} \log\left( \frac{f_\ast - f_0}{\epsilon}\right) \right)$ iterations, under the assumption that only coarse correction steps are taken. As only a finite number of coarse correction steps is taken, asymptotically the convergence rate matches the method for fine correction steps.}

\vspace{1mm}
\noindent
\textbf{Number of coarse correction steps.} We show that Algorithm \ref{alg:Two-level-algorithm} only performs a finite number of coarse correction steps, depending on the distance of $y_0$ to the exact solution $y_*$. We define the set diameter
\begin{align*}
	\mathcal{R}(y_{0}) = \max_{y\in\mc{L}_f(y_0)} {\|y-y_{\ast}\| }
\end{align*}
and establish a sublinear convergence rate dependent on it.

\begin{theorem}[{\cite[Theorem 3.5]{ho_newton-type_2019}} Sublinear convergence rate]
Let $\left ( y_{k}\right ) _{k\geq0}$ be the sequence generated by
Algorithm \ref{alg:Two-level-algorithm}. Then
\begin{align*}
f(y_k)-f(y_\ast) & \leq\frac{\mathcal{R}^{2}(y_{0})}{\Lambda}\frac{1}{2+k}.
\end{align*}
\end{theorem}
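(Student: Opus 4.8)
The plan is to run the classical $O(1/k)$ argument for descent methods, using Theorem \ref{thm:mls-decrease} as the per-step decrease guarantee and convexity of $f$ to convert the gradient-squared decrease into a recursion on the optimality gap. Write $\delta_k := f(y_k) - f(y_\ast) \ge 0$. Since both the coarse-correction and the fine-correction branches of Algorithm \ref{alg:Two-level-algorithm} satisfy the bound of Theorem \ref{thm:mls-decrease}, the sequence $(\delta_k)$ is nonincreasing; hence every iterate stays in the level set $\mathcal{L}_{f}(y_0)$ and obeys the uniform diameter bound $\|y_k - y_\ast\| \le \mathcal{R}(y_0)$.

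First I would bound the gap by the gradient. Because $f \in \mathcal{S}_{\mu,L}^{2,1}$ is convex, the subgradient inequality at $y_k$ gives $\delta_k \le \langle \nabla f(y_k), y_k - y_\ast\rangle$, and Cauchy--Schwarz together with the diameter bound yields $\delta_k \le \|\nabla f(y_k)\|_\ast\,\mathcal{R}(y_0)$, i.e.\ $\|\nabla f(y_k)\|_\ast^2 \ge \delta_k^2/\mathcal{R}^2(y_0)$. Substituting this into the sufficient-decrease inequality $\delta_k - \delta_{k+1} \ge \Lambda\|\nabla f(y_k)\|_\ast^2$ of Theorem \ref{thm:mls-decrease} produces the quadratic recursion
\[
\delta_k - \delta_{k+1} \ge \frac{\Lambda}{\mathcal{R}^2(y_0)}\,\delta_k^2 .
\]

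Next I would invert and telescope. Dividing by $\delta_k\delta_{k+1}$ and using $\delta_{k+1}\le\delta_k$ gives $\tfrac{1}{\delta_{k+1}} - \tfrac{1}{\delta_k} \ge \Lambda/\mathcal{R}^2(y_0)$; summing from $0$ to $k-1$ yields $\tfrac{1}{\delta_k} \ge \tfrac{1}{\delta_0} + k\,\Lambda/\mathcal{R}^2(y_0)$, hence $\delta_k \le \mathcal{R}^2(y_0)/\bigl(\Lambda k + \mathcal{R}^2(y_0)/\delta_0\bigr)$, which is the claimed $O(1/k)$ rate. To land exactly on the denominator $2+k$ I would invoke a bound on the initial gap ensuring $\mathcal{R}^2(y_0)/\delta_0 \ge 2\Lambda$ (for instance from the one-step decrease applied at $k=0$ together with $M_f$-smoothness), as in \cite[Theorem 3.5]{ho_newton-type_2019}.

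The routine steps are the convexity/Cauchy--Schwarz bound and the telescoping inversion. The main thing requiring care is the uniform diameter bound: one must confirm that $(\delta_k)$ is monotone along the \emph{entire} algorithm -- not merely on coarse steps -- so that $y_k \in \mathcal{L}_{f}(y_0)$ holds for every $k$ and $\mathcal{R}(y_0)$ is a valid uniform bound on $\|y_k - y_\ast\|$; this is precisely what Theorem \ref{thm:mls-decrease} furnishes. A secondary bookkeeping point is pinning the additive constant in the denominator to match $2+k$ rather than the slightly weaker $k+1$ that the bare telescoping produces.
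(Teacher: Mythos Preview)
The paper does not give its own proof of this theorem: it is stated with a direct citation to \cite[Theorem 3.5]{ho_newton-type_2019} and no argument appears in the text or the appendix (which only proves Lemma \ref{lem:coarse-decrease} and Corollary \ref{cor:mls-linear-rate}). Your proposal is the standard sublinear-rate argument and is correct: the convexity/Cauchy--Schwarz step, the substitution into the sufficient-decrease bound of Theorem \ref{thm:mls-decrease}, and the inversion-and-telescoping are all fine, and your observation that monotonicity of $(\delta_k)$ keeps every iterate in $\mathcal{L}_f(y_0)$ is exactly what is needed to use $\mathcal{R}(y_0)$ uniformly. This is precisely the route taken in the cited reference, so there is nothing to compare against within the present paper.

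The only point that is not fully nailed down is the one you flag yourself: the bare telescoping gives $\tfrac{1}{\delta_k} \ge \tfrac{1}{\delta_0} + k\Lambda/\mathcal{R}^2(y_0)$, and turning this into the denominator $2+k$ requires the initial bound $\delta_0 \le \mathcal{R}^2(y_0)/(2\Lambda)$. Your sketch of how to obtain it (one application of the decrease inequality together with smoothness) is plausible but not spelled out; if you want the statement exactly as written you should make that step explicit, or else be content with the equivalent $O(1/k)$ conclusion with a slightly different additive constant.
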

Using this result and a criterion for fine correction steps, we determine an upper bound on the number of coarse correction steps. Specifically, if $f$ is coercive and $\mathcal{R}(y_0)$
 is bounded, Algorithm \ref{alg:Two-level-algorithm} performs only finitely many coarse corrections. The convergence rate then matches that of the fine correction method for sufficiently large 
$k$. However, numerical experiments show that incorporating coarse corrections significantly improves performance compared to using fine corrections alone; see Section \ref{sec:Tomography} for details.
\begin{lemma}[{\cite[Lemma 3.6]{ho_newton-type_2019}}]
No coarse correction step will be taken by Algorithm \ref{alg:Two-level-algorithm}
when $\|\nabla f(y_{k})\|\leq\frac{\epsilon}{\omega}$, where $\omega=\max\left\{ \|P\|,\|R\|\right\} $
and $\epsilon>0$ is user-defined.
\end{lemma}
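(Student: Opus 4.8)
The plan is to show that the smallness hypothesis on $\|\nabla f(y_k)\|$ forces the coarse-correction condition to fail, so that Algorithm~\ref{alg:Two-level-algorithm} necessarily takes the fine-correction branch (Line~\ref{mls-fine-correction}). Recall that a coarse-correction step is attempted only when both criteria adapted from \cite{wen_line_2010} hold at $y_k$; in particular the second criterion requires the strict inequality $\|P^{\top}\nabla f(y_k)\|_{\ast} > \epsilon$. It therefore suffices to prove that $\|P^{\top}\nabla f(y_k)\|_{\ast} \le \epsilon$ under the hypothesis, since this alone rules out the coarse model regardless of the first criterion.

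The key step is a single operator-norm estimate. Using the Galerkin condition $R = P^{\top}$ (so that $P^{\top}\nabla f(y_k)$ is precisely the restricted gradient $R\nabla f(y_k) = \nabla\psi_k(Ry_k)$ appearing in the first-order coherence identity \eqref{eq:first-order-coherence}), submultiplicativity of the operator norm together with $\|P^{\top}\| = \|R\| \le \omega$ gives
\begin{align*}
\|P^{\top}\nabla f(y_k)\|_{\ast} \le \|P^{\top}\|\,\|\nabla f(y_k)\|_{\ast} \le \omega\,\|\nabla f(y_k)\|_{\ast}.
\end{align*}
Substituting the hypothesis $\|\nabla f(y_k)\| \le \epsilon/\omega$ yields $\|P^{\top}\nabla f(y_k)\|_{\ast} \le \omega \cdot (\epsilon/\omega) = \epsilon$, which violates the strict inequality $> \epsilon$ required to trigger a coarse correction. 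Hence the condition to use the coarse model is not satisfied at $y_k$, and a fine correction is performed.

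There is no substantive obstacle here: the result is a direct consequence of the operator-norm bound and the definition of the coarse-correction condition. The only point requiring care is the primal/dual norm bookkeeping — namely, verifying that the operator norm $\|P^{\top}\|$ used in the submultiplicative estimate is compatible with the dual norm $\|\cdot\|_{\ast}$ in which the criterion is stated. In the Euclidean setting this is immediate, since $\|\cdot\|_{\ast} = \|\cdot\|_2$ is self-dual and $\|P^{\top}\|_2 = \|P\|_2 = \|R\|_2 \le \omega$.
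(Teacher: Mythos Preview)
Your argument is correct and is precisely the intended one: bound $\|P^{\top}\nabla f(y_k)\|_{\ast}\le\omega\|\nabla f(y_k)\|_{\ast}\le\epsilon$, which violates the strict threshold $>\epsilon$ in the coarse-correction criterion. The paper does not supply its own proof of this lemma (it is quoted from \cite{ho_newton-type_2019}), and your derivation matches the standard one-line operator-norm estimate behind that citation.
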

\begin{lemma}[{\cite[Lemma 3.7]{ho_newton-type_2019}}]
No coarse correction step will be taken after
\[
\left(\frac{\omega}{\epsilon}\right)^{2}\frac{\mathcal{R}^{2}(y_{0})}{\Lambda^{2}}-2\qquad\text{iterations.}
\]
\end{lemma}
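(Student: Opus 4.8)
The plan is to argue by contraposition, combining three ingredients that are already in hand: the gradient threshold below which coarse correction is suppressed (the previous lemma, \cite[Lemma 3.6]{ho_newton-type_2019}), the per-step sufficient decrease of Theorem~\ref{thm:mls-decrease}, and the sublinear rate \cite[Theorem 3.5]{ho_newton-type_2019}. Suppose a coarse correction step happens to be taken at some iteration $k$. By the contrapositive of the previous lemma, this forces $\|\nabla f(y_k)\|_\ast > \epsilon/\omega$, and I would use this single inequality as the hook for everything that follows. (The threshold lemma is stated with $\|\cdot\|$ and the decrease theorem with $\|\cdot\|_\ast$; since the inner product is the standard one these norms coincide, so I would note this once and then use them interchangeably.)

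First I would invoke Theorem~\ref{thm:mls-decrease} with its constant $\Lambda=\min\{\Lambda_f,\dots\}$, which applies to every step of Algorithm~\ref{alg:Two-level-algorithm} irrespective of whether it is a coarse or a fine correction, to get
\begin{equation*}
f(y_k)-f(y_{k+1}) \;\geq\; \Lambda\,\|\nabla f(y_k)\|_\ast^2 \;>\; \Lambda\,\frac{\epsilon^2}{\omega^2}.
\end{equation*}
Since $y_\ast=\argmin f$ gives $f(y_{k+1})\geq f(y_\ast)$, the left-hand side is at most the optimality gap, so $f(y_k)-f(y_\ast) > \Lambda\,\epsilon^2/\omega^2$. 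This is the crucial \emph{lower} bound on the gap at any iteration hosting a coarse step.

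Next I would feed this into the sublinear rate $f(y_k)-f(y_\ast)\leq \mathcal{R}^2(y_0)/\bigl(\Lambda(2+k)\bigr)$. Chaining the two inequalities gives
\begin{equation*}
\Lambda\,\frac{\epsilon^2}{\omega^2} \;<\; \frac{\mathcal{R}^2(y_0)}{\Lambda\,(2+k)},
\end{equation*}
and solving for $k$ yields $k < (\omega/\epsilon)^2\,\mathcal{R}^2(y_0)/\Lambda^2 - 2$. Thus every iteration at which a coarse correction occurs has index strictly below this bound, so no coarse correction is taken once the iteration count reaches it, which is exactly the assertion.

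The one genuinely delicate choice — and the reason the bound emerges with $\Lambda^2$ in the denominator and with no Lipschitz constant $M_f$ appearing — is the decision in the second paragraph to lower-bound the optimality gap by the single-step decrease $f(y_k)-f(y_{k+1})$ rather than through smoothness. A smoothness estimate $f(y_k)-f(y_\ast)\geq \tfrac{1}{2M_f}\|\nabla f(y_k)\|_\ast^2$ would also close the argument, but it would introduce an $M_f$ and leave only $\Lambda^{-1}$; routing the bound through the sufficient decrease instead makes one factor of $\Lambda$ enter from Theorem~\ref{thm:mls-decrease} and the other from the sublinear rate. The only bookkeeping to watch is that the \emph{same} constant $\Lambda$ is used consistently in both bounds, which is guaranteed because both results are stated with $\Lambda=\min\{\Lambda_f,\dots\}$ from Theorem~\ref{thm:mls-decrease}.
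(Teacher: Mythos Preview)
The paper does not supply its own proof of this lemma; it is quoted verbatim from \cite[Lemma~3.7]{ho_newton-type_2019} and the Appendix only proves Lemma~\ref{lem:coarse-decrease} and Corollary~\ref{cor:mls-linear-rate}. That said, your argument is correct and is the standard one behind this type of bound: the contrapositive of the gradient-threshold lemma gives a strictly positive lower bound on $\|\nabla f(y_k)\|$, the sufficient-decrease theorem converts that into a lower bound $\Lambda(\epsilon/\omega)^2$ on the optimality gap $f(y_k)-f(y_\ast)$, and pairing this with the sublinear upper bound $\mathcal{R}^2(y_0)/(\Lambda(2+k))$ forces the stated constraint on $k$. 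Each step checks out, including the algebra in the final inequality, and your discussion of why routing through Theorem~\ref{thm:mls-decrease} rather than through an $L$-smoothness estimate produces the $\Lambda^2$ in the denominator is a nice observation that explains the form of the constant.
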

\begin{remark}
The smaller the choice of $\epsilon>0$, the more coarse correction
steps will be taken. Depending on the descent method for the fine correction steps, the choice of $\epsilon$ could be different.
For example, a good choice for Newton's method with $\|\cdot\|_2$ and $d_{k}=-\left[\nabla^{2}f(y_{k})\right]^{-1}\nabla f(y_{k})$
is $\epsilon=3\omega(1-2\rho_{1})m_{f}^{2}/M_{f}$
\cite[p.15]{ho_newton-type_2019}.
\end{remark}
\begin{remark}
In Section \ref{sec:Multilevel-Optimization}, we discussed the box-constrained coarse model (\ref{eq:coarse-box-lower}, \ref{eq:coarse-box-upper}), which depends on the current fine-level iterate $y_k$. For functions where the Hessian's condition number varies across the domain (e.g., that involve the Bregman divergence), this poses challenges for establishing convergence rates unless the coarse constraints can be uniformly bounded.
\end{remark}

An alternative approach is to approximate the active set of constraints using projected gradient descent and apply the unconstrained multilevel method to the inactive set. The step size $\alpha_{k}$ 
 should be adjusted to ensure feasibility of the iterates. If the active set changes, gradient projection iterations can be used to update it as needed \cite{bertsekasNonlinearProgramming1995}. Exploring such extensions is left for future work.

\section{Experiments}
\label{sec:Tomography}
% In Section \ref{sec:Rate-of-Convergence}, we studied the convergence of Algorithm \ref{alg:Two-level-algorithm} for $\mu$-strongly convex functions with Lipschitz-continuous gradient on $\mathbb{R}^n$. In this section, we \emph{experimentally} extend these results to smooth functions which are not necessarily strongly convex. 

We compare Algorithm \ref{alg:Two-level-algorithm} to basic (projected) gradient descent and the state-of-the-art quasi-Newton method L-BFGS \cite{byrd_limited_1995,zhu_algorithm_1997}, which handles box constraints. The comparison includes the proposed coarse models \eqref{eq:coarse-model-1} and \eqref{eq:box-coarse-model}, as well as the algebraic coarse model \eqref{eq:algebraic-coarse}. Tests are conducted on both \emph{unconstrained} and \emph{box-constrained} variational models for discrete tomography. Whereas the relevant objectives are not strongly convex, Algorithm \ref{alg:Two-level-algorithm} exhibits the same behavior as predicted for strongly convex functions in Section \ref{sec:Rate-of-Convergence}.

\vspace{1mm}
\noindent
\textbf{Data setup.} We reconstruct a large scale image $y$ ($n=1024\times 1024$) subsampled using a tomographic projection matrix
 $A\in\mathbb{R}^{m\times n}$, 
generated with ASTRA-toolbox. 
 Parallel beam projections are taken at equidistant angles between
$0$ and $\pi$, with a 10\% undersampling rate at the fine grid. 

% An entry $a_{ij}$ of $A$ holds the length
% of the line segment of the $i$-th projection ray passing through
% the $j$-th pixel.

\vspace{1mm}
\noindent
\textbf{Unconstrained optimization.}
We first consider the unconstrained problem  
\begin{align}
\min_{y \in \mathbb{R}^{n}} f(y) := \frac{1}{2}\|Ay - b\|_{2}^{2} + \lambda L_{\rho}(Dy), \quad \lambda = 0.1, \quad \rho = 0.01,  
\label{eq:tomography-unconstrained}
\end{align}  
where \( L_{\rho} \) is the Pseudo-Huber function defined as \( L_\rho(a) = \sum_{i=1}^n (\sqrt{\rho^2 + a_i^2} - \rho) \), and \( D \) is the discretized image gradient using forward differences.  
The objective \( f \) has a Lipschitz continuous gradient but lacks strong convexity without addition of a $\mu\|\cdot\|_2$ regularizer (which offered no improvement in our experiments.)

\vspace{1mm}
\noindent
\textbf{Lipschitz constants.}
We analyze the grid-dependent smoothness of the geometric coarse model \eqref{eq:coarse-model-1} compared to the algebraic coarse model \eqref{eq:algebraic-coarse}. Ignoring the linear term in \eqref{eq:coarse-model-1}, the Lipschitz constant of $\psi_k $ equals that of the objective $f$ \eqref{eq:tomography-unconstrained} discretized on the respective grid. Similarly, ignoring the linear terms in $\varphi_k $ \eqref{eq:algebraic-coarse}, the quadratic term  
\begin{align*}
\frac{1}{2} \left\langle x - Ry_k, Q_k(x - Ry_k) \right\rangle, \quad Q_k = R \nabla^2 f(y_k) P  
\end{align*}  
yields the Lipschitz constants for the gradients of $\psi_k$ and $ \varphi_k $  
\begin{equation}
L_{\nabla \psi_k} = \|A\| \|A^\top\| + \frac{8\lambda}{\rho}, \quad  
L_{\nabla \varphi_k} = \|Q_k\|  
\leq \|R\| \|\nabla^2 f(y_k)\| \|P\| \leq \omega^2 L_{\nabla f},  
\end{equation}  
where $ \omega = \max(\|R\|, \|P\|) $. For concrete numerical values, see Table \eqref{tab:lipschitz-constants}.

%%%%%%%%%%%%%%%%%%%%%%%%%%%%%%%%%%%%%%%
\begin{table}[hbt]
  \begin{tabular}{c|c|cc|}
    Grid size & $L_{\nabla \psi_k}$ & $L_{\nabla \varphi_k}$ \\
    \hline
    32 & $4.5480\cdot 10^3$ & $7.276793\cdot 10^4$ & 	\\
    64 & $9.4144\cdot 10^3$ & $1.506304\cdot 10^5$ & 	\\
    128 & $1.9176\cdot 10^4$ & $3.068164\cdot 10^5$ & 	\\
  \end{tabular}
  \hfill
  \begin{tabular}{c|c|cc|}
    Grid size & $L_{\nabla \psi_k}$ & $L_{\nabla \varphi_k}$ \\
    \hline
    256 & $3.8284\cdot 10^4$ & $6.125398\cdot 10^5$ & 	\\
    512 & $7.6431\cdot 10^4$ & $1.222901\cdot 10^6$ 	& 	\\
    1024 & $1.5297\cdot 10^5$ & $2.447472\cdot 10^6$ &
  \end{tabular}
  \caption{Lipschitz constants for geometric \eqref{eq:coarse-model-1} and algebraic \eqref{eq:algebraic-coarse} coarse models for  $\rho=0.01$, $\lambda=0.1$.}
  \label{tab:lipschitz-constants}
\end{table}

\vspace{1mm}
\noindent
\textbf{Multilevel implementation.}
We use the recursive version of Algorithm \ref{alg:Two-level-algorithm} applied 
to the Bone phantom  of size $n=1024\times 1024$, compare Fig. \ref{fig:bone-reconstruction}, with a total of 6 levels (with coarsest grid size $32\times32$).  At the finest level, we perform $k=200$ outer iterations and $k=1$ at coarser levels $\ell=1,\dots,8$. Coarse correction steps use the constants $\kappa=0.47$ and $\epsilon=10^{-3}$. For the information transfer between levels we set $R=\frac{1}{16}I_{\ell}^{\ell-1}$ and $P=4R^{T}$, where $I_{\ell}^{\ell-1}$ is the full weighting operator. 
Fine correction steps are done with gradient descent with $2^{\ell}$ iterations for $\ell\in\{0,\dots,5\}$. \; Step sizes for coarse correction steps and gradient descent are determined
through Wolfe line search, using $c_{1}=10^{-4}$ and $c_{2}=0.9$. \; 
Results are compared to gradient descent with Wolfe line search on the finest level, and L-BFGS with 3 correction pairs.
We summarize our results in
Figure \ref{fig:results} (top row).

% We consider a discrete tomography problem \cite{Herman_book} on the positive orthant, and assume images are discretized on $n=n_1\times n_2$ points 
% in a two dimensional domain in $\mathbb{R}^{2}$. Using the one-dimensional
% discrete derivative operator $\partial_{d}:\mathbb{R}^{d}\rightarrow\mathbb{R}^{d}$,
% $(\partial_{d})_{ij}=-1$ if $i=j<d$, $(\partial_{d})_{ij}=+1$ if
% $j=i+1\leq d$ and $(\partial_{d})_{ij}=0$ otherwise, along each
% spatial direction, we define the discrete gradient matrix of an $n_1\times n_2$
% discrete image by $D:=\begin{pmatrix}D_{1}\\
% D_{2}
% \end{pmatrix}=\begin{pmatrix}\partial_{n_1}\otimes I_{n_2}\\
% I_{n_1}\otimes\partial_{N}
% \end{pmatrix}$, where $\otimes$ stands for the Kronecker product and $I_{N}$ is
% the identity matrix of dimension $N$. 

\vspace{1mm}
\noindent
\textbf{Constrained optimization.}
We consider the bound constrained problem
\begin{align}
f(y)=\min_{y\in\mathbb{R}_{+}^{n}}\text{KL}(Ay,b)=\min_{y\in\mathbb{R}_{+}^{n}}\left\langle Ay,\log Ay / b\right\rangle -\left\langle y-b,\mathbbm{1}\right\rangle ,
\label{eq:tomography-constrained}
\end{align}
which has no Lipschitz-continuous gradient on the domain $\mathbb{R}^n_+$. To control behavior of the gradient 
on the boundary, we consider the \emph{thresholded} domain 
$\mathbb{R}^n_{\geq\beta}:=\{y\in\mathbb{R}^n: y_i\geq\beta>0,i\in[n]\}$. The parameters for multilevel optimization are 
as in the unconstrained case, except we use projected gradient descent in each level. \;
Step sizes for coarse correction steps and gradient descent are determined with Armijo line search along 
the projection arc, using $c_{1}=10^{-4}$ and $\beta=0.8$. \;
Results are compared to gradient descent with line search along the projection arc on the finest level, 
and L-BFGS-B with 3 correction pairs. We summarize our results in
Figure \ref{fig:results} (bottom row).

\section{Conclusion}
\label{sec:Conclusion}

In this work, we established precise convergence results for an unconstrained multilevel 
method using Nash's classical geometric coarse model \cite{nash_multigrid_2000}. The derived 
convergence rate applies without explicitly incorporating the Hessian of the fine-grid 
objective as done in \cite{ho_newton-type_2019}, which our approach avoids. We extended 
the method to box-constrained problems and applied it to a discrete tomography problem. 
Despite severe undersampling, the method converges rapidly when far from the solution, 
outperforming state-of-the-art techniques. However, near the solution, methods with 
superlinear convergence, such as L-BFGS, prove more effective.

\vspace{2mm}
\textbf{Acknowledgments.} This work is funded by Deutsche Forschungsgemeinschaft (DFG) under 
Germany’s Excellence Strategy EXC-2181/1 - 390900948 (the Heidelberg STRUCTURES Excellence 
Cluster).

\textbf{Disclosure of Interests.} The authors have no competing interests to declare that are 
relevant to the content of this article.

\begin{figure}[ht]
  \hspace*{-1.2cm}\begin{tabular}{cc}
    \includegraphics[width=0.55\columnwidth]{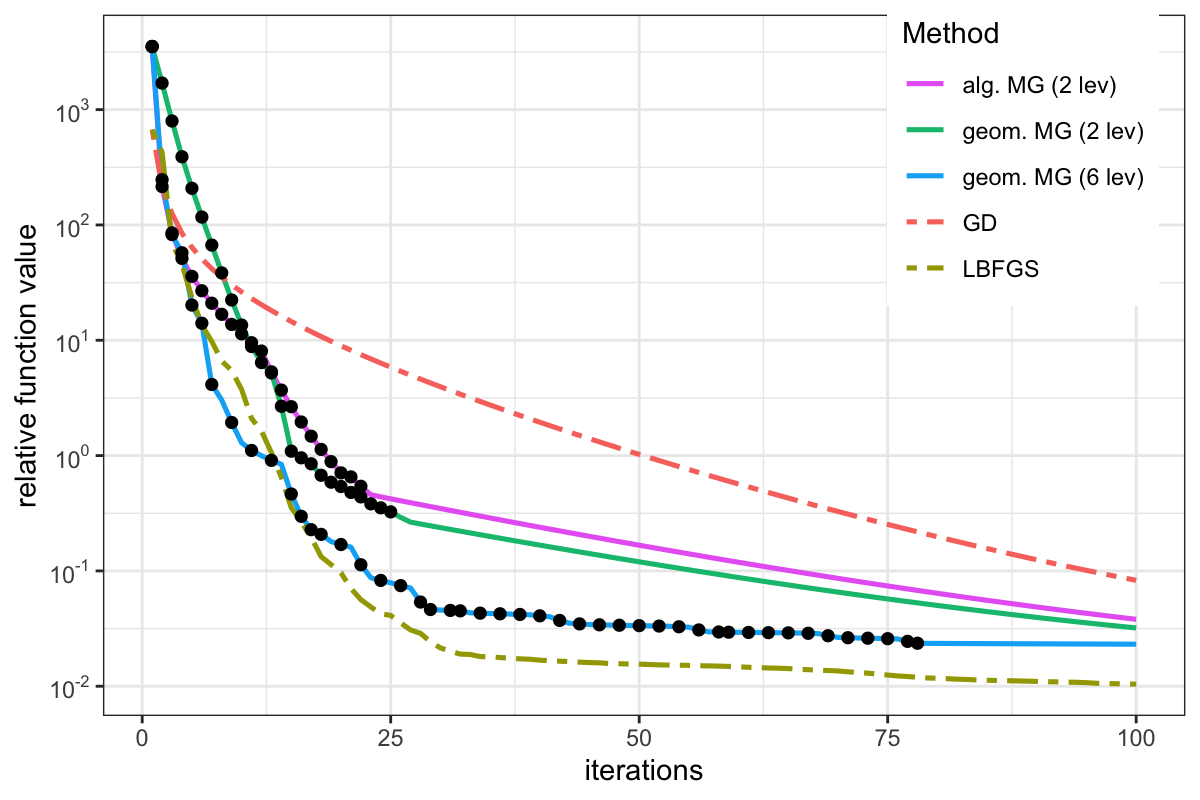}
  & \includegraphics[width=0.55\columnwidth]{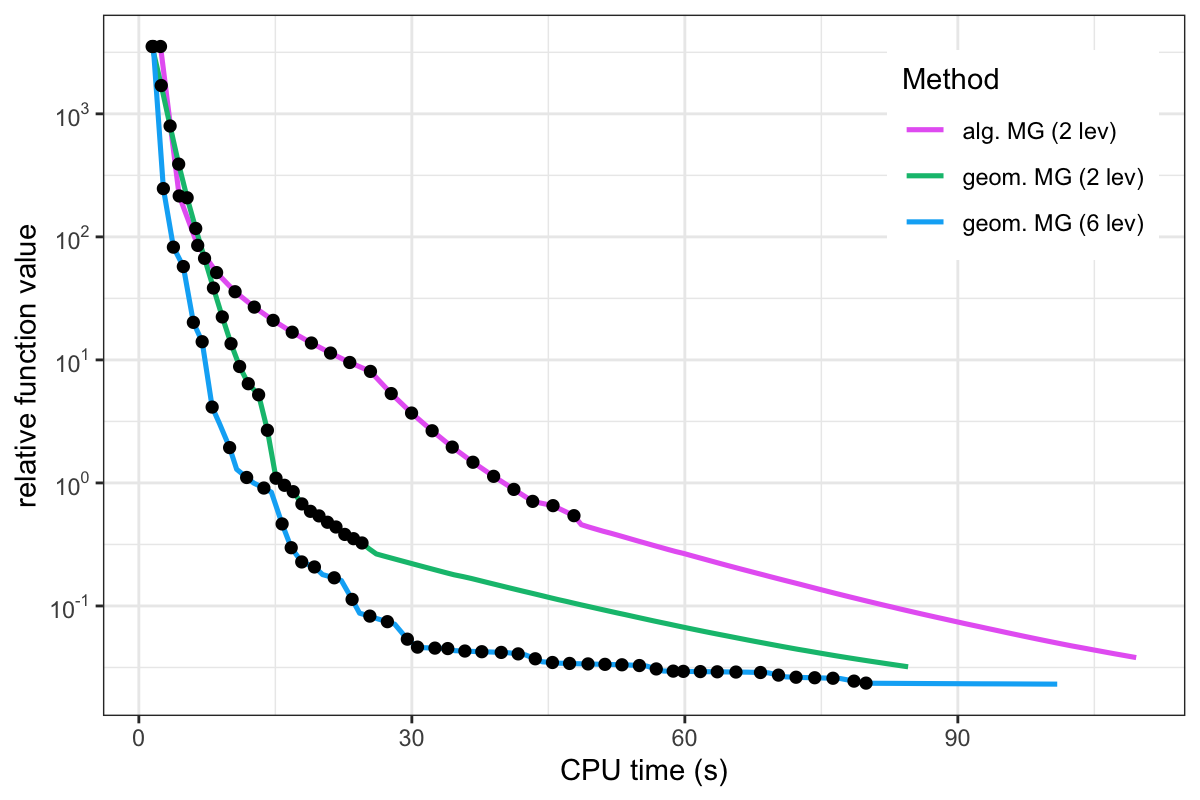} \\
    \includegraphics[width=0.55\columnwidth]{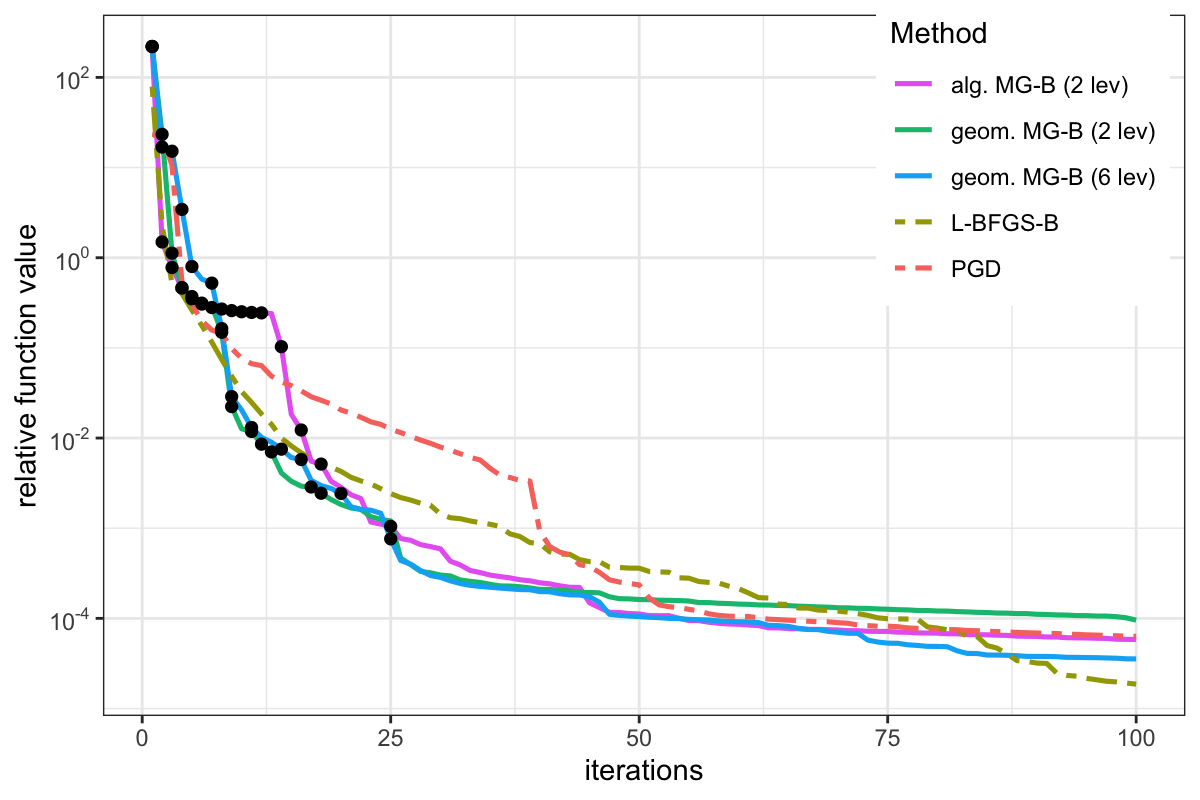}
  & \includegraphics[width=0.55\columnwidth]{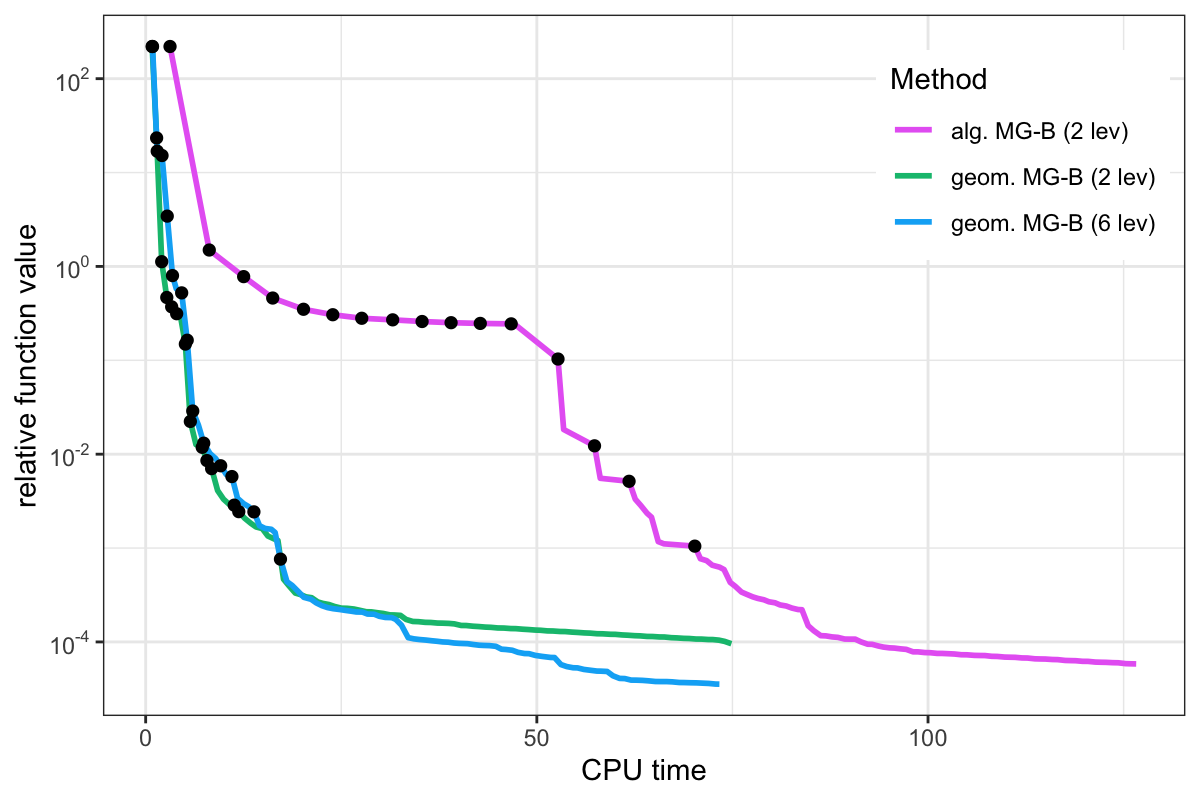} \\
  \end{tabular}
  \caption{\textbf{Comparison of algorithms.} The \emph{relative function value} (left) is 
  plotted against iteration count and \emph{CPU time} (right) for the unconstrained (top) 
  and constrained (bottom) discrete tomography problems in Section \ref{sec:Tomography}. 
  CPU time for L-BFGS is not included, as the C implementation is not directly comparable 
  to the multigrid MATLAB implementation. Adding more levels improves performance. 
  Furthermore, the \emph{geometric coarse models} \eqref{eq:coarse-model-1} and 
  \eqref{eq:box-coarse-model} outperform the \emph{algebraic coarse model}
   \eqref{eq:algebraic-coarse}. In the initial phase, the multilevel approach reduces 
   the objective more rapidly than L-BFGS.}
  \label{fig:results}
  \end{figure}

\bibliographystyle{plain}
\bibliography{bibliography}

\section*{Appendix}
We use the following \emph{growth property }for strongly convex functions.
\begin{lemma}[{\cite[Corollary 2.2.1]{nesterov_lectures_2018}}]
\label{lem:growth-property}
Let $f\in\mathscr{S}_{\mu}^{1}(\mathbb{R}^{n})$ and $\nabla f(y^{\ast})=0$. 
Then $f(y)\geq f(y^{\ast})+\frac{1}{2}\mu\|y-y^{\ast}\|^{2}$
for all $y\in\mathbb{R}^{n}$.
\end{lemma}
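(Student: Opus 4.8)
The plan is to invoke the first-order characterization of $\mu$-strong convexity and evaluate it at the critical point $y^{\ast}$. Membership $f \in \mathscr{S}_{\mu}^{1}(\mathbb{R}^{n})$ means that $f$ is continuously differentiable and $\mu$-strongly convex, which is equivalent to the statement that $h := f - \frac{\mu}{2}\|\cdot\|^{2}$ is convex. Applying the ordinary gradient inequality to the convex function $h$ and rearranging yields the quadratic lower bound
\begin{equation*}
f(y) \geq f(x) + \langle \nabla f(x), y - x\rangle + \frac{\mu}{2}\|y-x\|^{2}, \qquad \forall\, x,y \in \mathbb{R}^{n}.
\end{equation*}

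The key step is then to specialize this bound to $x = y^{\ast}$. By hypothesis $\nabla f(y^{\ast}) = 0$, so the inner-product term vanishes and we obtain
\begin{equation*}
f(y) \geq f(y^{\ast}) + \frac{\mu}{2}\|y - y^{\ast}\|^{2},
\end{equation*}
which is exactly the claimed growth property.

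There is no genuine obstacle here: once the first-order characterization of strong convexity is in hand, the entire argument is a single substitution. The only point worth verifying is that this characterization applies to the class $\mathscr{S}_{\mu}^{1}$, i.e.\ that differentiability alone suffices to pass from the ``convexity of $h$'' definition to the pointwise quadratic inequality; this is standard and uses only that a differentiable convex function lies above its tangent plane. I would therefore present the proof as the two displays above, with a one-sentence justification of the quadratic lower bound and the observation that the linear term drops out at a stationary point.
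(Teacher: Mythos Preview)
Your argument is correct and is the standard one: invoke the first-order strong-convexity inequality and set $x=y^{\ast}$ so the linear term vanishes. The paper does not actually supply a proof of this lemma---it merely quotes the statement as \cite[Corollary~2.2.1]{nesterov_lectures_2018}---so there is nothing to compare against beyond noting that your derivation is exactly how Nesterov obtains it.
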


\subsection*{Proof of Lemma \ref{lem:coarse-decrease}}
Let $x_k := Ry_k$ and $x_\ast$ a solution to (\ref{eq:coarse-model-1}). 
By Lemma \ref{lem:coarse-descent-direction} and Lemma \ref{lem:growth-property},
\begin{align*}
\left\langle \nabla f(y_k),d_k\right\rangle  & \leq\psi_k(x_\ast)-\psi_k(x_k)\leq-\frac{m_g}{2}\|x_{\ast}-x_{k}\|^{2}
\end{align*}
By Lipschitz continuity of $\nabla\psi_k$,
\begin{align*}
\|x_{k}-x_{\ast}\| & \geq\frac{1}{M_g}\|\nabla\psi_k(x_k)-\nabla\psi_k(x_\ast)\|_{\ast}=\frac{1}{M_g}\|\nabla\psi_k(x_k)\|_{\ast}
\end{align*}
and therefore
\begin{align}
\left\langle \nabla f(y_k),d_k\right\rangle  & \leq-\frac{m_g}{2M_g^{2}}\|\nabla\psi_k(x_k)\|_{\ast}^{2}\label{eq:lemma-descent-1}
\end{align}
Let $\tilde{d}_{k}\equiv x_\ast-x_k$. We have 
\[
\|d_k\|^{2}=\|P\tilde{d}_{k}\|^{2}\leq\|P\|^{2}\|\tilde{d}_{k}\|^{2}\leq\omega^{2}\|x_\ast-x_k\|^{2}
\]
By strong convexity of $\psi_k$,
\begin{align*}
\|x_\ast-x_k\|^{2} & \leq\frac{1}{m_g^{2}}\|\nabla\psi_k(x_k)\|_{\ast}^{2}
\end{align*}
by \cite[Theorem 2.1.10]{nesterov_lectures_2018} and therefore
\begin{equation}
\|d_k\|^{2}\leq\frac{\omega^{2}}{m_g^{2}}\|\nabla\psi_k(x_k)\|_{\ast}^{2}.\label{eq:lemma-descent-2}
\end{equation}
By first-order coherency (\ref{eq:first-order-coherence}) and the
condition for a coarse correction step,
\begin{equation}
\|\nabla\psi_k(x_k)\|_{\ast}^{2}=\|R\nabla f(y_k)\|_{\ast}^{2}\geq\kappa^{2}\|\nabla f(y_k)\|_{\ast}^{2},
\quad \|\nabla \psi_k(x_k)\|_{\ast} > 0.
\label{eq:lemma-descent-3}
\end{equation}
Let $c_{k}:=\left\langle d_k,\nabla f(y_k)\right\rangle <0$.

By Lemma \ref{lem:armijo-descent} the Armijo line search terminates with 
\begin{align*}
\alpha_{k} & \geq\frac{2\beta(\rho_{1}-1)}{M_f\|d_k\|^{2}}c_{k} \overset{(\ref{eq:lemma-descent-2})}{\geq}
\frac{2\beta(\rho_{1}-1)}{\|\nabla\psi_k(x_k)\|_{\ast}^{2}}\frac{m_g^{2}}{M_f\omega^{2}}c_{k}\\
\left(-c_{k}>c_{k}(\rho_{1}-1)>-\frac{1}{2}c_{k}\right)\qquad 
& >\frac{-\beta}{\|\nabla\psi_k(x_k)\|_{\ast}^{2}}\frac{m_g^{2}}{M_f\omega^{2}}c_{k}
\end{align*}
Therefore by the Armijo condition,
\begin{align*}
f(y_k+\alpha_{k}d_k)-f(y_k) \leq\rho_{1}\alpha_{k}c_{k}
& \,\,\leq-\rho_{1}\beta\frac{1}{\|\nabla\psi_k(x_k)\|_{\ast}^{2}}\frac{m_g^{2}}{M_f\omega^{2}}c_{k}^{2}\\ 
& \overset{(\ref{eq:lemma-descent-1})}{\leq}-\rho_{1}\beta
\frac{m_g^{3}}{M_fM_g^{2}\omega^{2}}\|\nabla\psi_k(x_k)\|_{\ast}^{2}\\
& \overset{(\ref{eq:lemma-descent-3})}{\leq}-\rho_{1}\beta
\frac{m_g^{3}\kappa^{2}}{M_fM_g^{2}\omega^{2}}\|\nabla f(y_k)\|_{\ast}^{2}
\end{align*}

\subsection*{Proof of Corollary \ref{cor:mls-linear-rate}}

By strong convexity of $f$, we have
\begin{align*}
f(y_k)-f(y_\ast) & \leq\frac{1}{2m_f}\|\nabla f(y_k)\|^{2}.
\end{align*}
Combining this with Theorem \ref{thm:mls-decrease} yields for $k\geq1$
\begin{align*}
[f(y_{k-1})-f(y_\ast)]-[f(y_k)-f(y_\ast)] & =f(y_{k-1})-f(y_k) \geq\Lambda\|\nabla f(y_{k-1})\|_{2}^{2}\\
 & \geq2m_f\Lambda[f(y_{k-1})-f(y_\ast)]
\end{align*}
or equivalently, 
\begin{align*}
f(y_k)-f(y_\ast) & \leq(1-2m_f\Lambda)[f(y_{k-1})-f(y_\ast)]\\
 & \leq\cdots\leq(1-2m_f\Lambda)^{k}[f(y_0)-f(y_\ast)].
\end{align*}

\end{document}